\numberwithin{equation}{section} 
\numberwithin{figure}{section} 
  \theoremstyle{plain}
  \newtheorem{thm}{Theorem}[section]
  \theoremstyle{remark}
  \newtheorem*{acknowledgement*}{Acknowledgement}
  \theoremstyle{remark}
  \newtheorem*{rem*}{Remark}
  \theoremstyle{plain}
  \newtheorem{lem}[thm]{Lemma}
  \theoremstyle{definition}
  \newtheorem{defn}[thm]{Definition}
  \theoremstyle{plain}
  \newtheorem{prop}[thm]{Proposition}
  \theoremstyle{plain}
  \newtheorem{cor}[thm]{Corollary}
\date{}
\newcommand*{\longhookrightarrow}{\ensuremath{\lhook\joinrel\relbar\joinrel\rightarrow}}
\begin{document}

\title{Unramified representations of reductive groups over finite rings}

\author{Alexander Stasinski}

\begin{abstract}
Lusztig has given a construction of certain representations of reductive
groups over finite local principal ideal rings of characteristic $p$,
extending the construction of Deligne and Lusztig of representations
of reductive groups over finite fields. We generalize Lusztig's results
to reductive groups over arbitrary finite local rings. This generalization
uses the Greenberg functor and the theory of group schemes over Artinian
local rings.
\end{abstract}

\address{DPMMS\\
University of Cambridge\\
Wilberforce Rd\\
Cambridge\\
CB3 0WB\\
U.~K.}

\email{a.stasinski@dpmms.cam.ac.uk}

\maketitle

\section*{Introduction}

In \cite{lusztig-preprint-published} Lusztig gave a construction
of certain representations of a reductive group over a finite ring
coming from the ring of integers in a local field of characteristic
$p>0$, modulo some power of its maximal ideal. Such rings can equivalently
be characterized as finite local principal ideal rings of characteristic
$p$. The construction, which is cohomological in nature and is a
generalization of the construction of Deligne and Lusztig \cite{delignelusztig},
attaches irreducible representations to certain characters of {}``maximal
tori''. It was stated in \cite{lusztig-preprint-published} that
the restriction on the ring is not essential, and that a similar method
applies in the case when the ring is a finite quotient of the ring
of integers of an arbitrary non-archimedean local field. Thus the
first natural problem is to realize the construction for arbitrary
finite local principal ideal rings. Unlike the characteristic $p$
case, it turns out that for arbitrary rings of this type it is no
longer possible to stay in the realm of algebraic groups over fields,
and instead the proper setting is that of group schemes over Artinian
local rings, and the theory of the Greenberg functor. Now this general
setting makes it clear that the construction does not have to be restricted
to principal ideal rings, but can in fact be carried out uniformly
for reductive groups over arbitrary finite local rings. Since any
finite (commutative) ring $R$ can be decomposed as a direct sum of
finite local rings $R\cong\bigoplus_{i}R_{i}$, it follows that if
$\mathbf{G}$ is an affine group scheme over $R$, then the group
of points $\mathbf{G}(R)$ can be written as a direct sum $\mathbf{G}(R)\cong\bigoplus_{i}\mathbf{G}(R_{i})$.
In the study of representations of $\mathbf{G}(R)$ it is therefore
enough to consider representations of the points of $\mathbf{G}$
over finite local rings.

In this paper we generalize Lusztig's construction to reductive group
schemes over arbitrary finite local rings. In particular, we thus
go beyond the original conception of rings of integers in local fields.
The representations we construct depend in a sense only on the structure
of the reductive group scheme over the residue field of the local
ring, and are essentially independent of the arithmetic of the ring
in question. This is a reason why we call these \emph{unramified}
representations. As Lusztig remarks in \cite{lusztig-preprint-published},
it seems likely that the representations we construct (in the principal
ideal ring case with $r\geq2$ and $\mathbf{G}$ split) coincide with
those constructed by G\'erardin \cite{Gerardin} by a non-cohomological
method. The latter are closely related to unramified maximal tori
and the unramified discrete series representations of $p$-adic groups,
and is another reason for our choice of terminology. There is also
some overlap with the representations constructed by Hill \cite{Hill_regular,Hill_semisimple_cuspidal}
in the case $\mathbf{G}=\text{GL}_{n}$, again by a non-cohomological
method.

After we have set up the framework of group schemes over local Artinian
rings and their associated algebraic groups, and proved several auxiliary
results, the proofs of the main theorems follow closely those of Lusztig.
We shall therefore give here a detailed comparison between the present
paper and the contents of \cite{lusztig-preprint-published}. 

The first section sets up some basic notation, introduces reductive
group schemes over Artinian local rings with residue field $\overline{\mathbb{F}}_{q}$,
the Greenberg functor, and the corresponding algebraic groups. For
the theory of group schemes we shall frequently refer to SGA~3 \cite{SGA3},
which seems to be the only complete reference covering what we need.
We sometimes also refer to Demazure's thesis \cite{Demazure}, which
is a convenient summary of many of the results we need from SGA~3.
For definitions and results concerning the Greenberg functor, we refer
to the original papers of Greenberg \cite{greenberg1,greenberg2}.
In \cite{lusztig-preprint-published}, group schemes could be bypassed
altogether because the base ring $\overline{\mathbb{F}}_{q}[[\epsilon]]/(\epsilon^{r})$
is an $\overline{\mathbb{F}}_{q}$-algebra, and so one can start with
an affine algebraic group $G_{1}$ over $\overline{\mathbb{F}}_{q}$,
and consider the group of points $G=G_{1}(\overline{\mathbb{F}}_{q}[[\epsilon]]/(\epsilon^{r}))$.
By using elementary considerations rather than the general formalism
of the Greenberg functor, one can then show that $G$ carries a structure
of affine algebraic group over $\overline{\mathbb{F}}_{q}$. Moreover,
there is a natural inclusion $G_{1}\subseteq G$, so the whole subgroup
structure of $G_{1}$ can be easily transferred to $G$. In the general
situation considered in the present paper, we are forced instead to
start with a reductive group scheme $\mathbf{G}$ over an Artinian
local ring $A$ with residue field $\overline{\mathbb{F}}_{q}$, and
work directly with the structure of $\mathbf{G}$. Throughout the
paper, we shall fix the ring $A$, write $\mathfrak{m}$ for its maximal
ideal, and denote by $r$ the smallest positive integer such that
$\mathfrak{m}^{r}=0$. We then simply write $G$ for the algebraic
group associated to $\mathbf{G}(A)$. One reason why this approach
is possible is that the theory of affine smooth group schemes over
strictly Henselian local rings in many ways resembles the classical
theory of algebraic groups over algebraically closed fields.

In the second section, Lemmas \ref{lem:Lusztig's 1.1}, \ref{lem:Lusztig's 1.3},
\ref{lem:Lusztig's 1.4}, and \ref{lem: induction-commutator} are
due to Lusztig, although of course stated here in our wider generality.
In order to clarify some steps of the proof of Lemma~\ref{lem:Lusztig's 1.3},
we have given a proof using the new Lemma~\ref{lem:Unique decomposition}.
Several results in \cite{lusztig-preprint-published} were stated
under the hypothesis $r\geq2$, and since the results were already
known for $r=1$, this is no loss of generality. Nevertheless, we
have removed this hypothesis in order to emphasize that the proofs
in fact work uniformly for all $r\geq1$, except for the proof of
Theorem~\ref{thm:Big-thm} where one has to separate the case $r=1$
(where regularity of characters is not required), from the case $r\geq2$
(where regularity is used, and the proof becomes longer). This also
affects Prop.~\ref{pro:scalar product formula} and Coroll.~\ref{cor:irrep & indepU},
which we have stated in a form that includes the case $r=1$.

The proofs of Lemmas~\ref{lem:commutator-Iwahori}, \ref{lem:Unique decomposition},
\ref{lem:regularity test} and \ref{lem:roots-commutator} are new,
although the results have to some degree been known earlier either
implicitly in unpublished form or in various special cases. In particular,
Lemma~\ref{lem:commutator-Iwahori} provides a commutator relation
and Iwahori decomposition for (certain) group schemes over local rings.
These results are well-known for certain classes of groups and rings,
but our results hold quite generally and are proved by arguments which
are more geometric than the classical group theoretic approaches.
Lemmas~\ref{lem:regularity test} and \ref{lem:roots-commutator}
were stated in \cite{lusztig-preprint-published} without proof, and
since our proofs are not obvious, we have included them here. The
proof of Lemma~\ref{lem:roots-commutator}(c) is especially long,
and is an example of the extra complications that appear in our general
setting compared to the case of rings of characteristic $p$, where
the proof can be reduced to the case of $\mbox{SL}_{2}$.

In the final section, we have collected all the main results, including
our version of Lusztig's Lemma~1.9 which we have here given the status
of Theorem since its proof is the longest and most difficult in the
whole paper and its consequences include the most important results.
The ideas of the proofs in this section are due to Lusztig, and our
presentation follows \cite{lusztig-preprint-published}, except with
regards to the use of the elements $\hat{w}$ (see below). We have
also added some references to various results used in the proofs,
and some clarifying remarks. We have included these reworkings of
Lusztig's proofs in order to get a complete and coherent exposition,
and we believe this to be a more satisfying solution (both logically
and from the reader's point of view) than if we had simply stated
the generalized main results and referred to proofs appearing in a
more special context.

If $\mathbf{T}$ and $\mathbf{T}'$ are two maximal tori in $\mathbf{G}$,
we shall denote the corresponding closed subgroups of $G$ by $T$
and $T'$, respectively. Reducing modulo $\mathfrak{m}$ we also get
the maximal tori $T_{1}$ and $T_{1}'$ in $G_{1}$. A remark applicable
to both of the last two sections, is that unlike the case where the
ring $A$ has characteristic $p$, in general we cannot directly transfer
elements of the transporter $N(T_{1},T_{1}')=\{n\in G_{1}\mid n^{-1}T_{1}n=T_{1}'\}$
to elements of $N(T,T')=\{n\in G\mid n^{-1}Tn=T'\}$. Instead we use
results from SGA~3 showing that the transporter (or normalizer) group
schemes of maximal tori are smooth, and from this we conclude that
the natural map $N(T,T')\rightarrow N(T_{1},T_{1})$ is surjective.
For any element $\dot{w}\in N(T_{1},T_{1}')$ we can thus work with
a lift $\hat{w}\in N(T,T')$ under this map. It turns out that the
ambiguity in the choice of lifts does not affect the results, and
this provides a sense in which the results only depend on structures
over the residue field.

\begin{acknowledgement*}
Parts of this work were carried out while the author was supported
respectively by EPSRC Grants GR/T21714/01 and EP/C527402. The author
wishes to thank A.-M.~Aubert, V.~Snaith, and S.~Stevens for their
interest in and support of this work, B.~Totaro for helpful discussions,
and U.~Onn for an invitation and opportunity to lecture on this material.
\end{acknowledgement*}

\section{Notation}

Throughout this paper a ring will always refer to a (unital, associative)
commutative ring. Let $A$ be an Artinian local ring with maximal
ideal $\mathfrak{m}$ and perfect residue field $k$. Let $r$ denote
the smallest positive integer such that $\mathfrak{m}^{r}=0$. Let
$X$ be a scheme of finite type over $A$ (as usual, we shall speak
of a scheme over the ring $A$ rather than over the scheme $\Spec A$).
Greenberg \cite{greenberg1,greenberg2} has defined a functor $\mathcal{F}_{A}$
from the category of schemes of finite type over $A$ to the category
of schemes of finite type over $k$, such that there exists a canonical
isomorphism $X(A)\cong(\mathcal{F}_{A}X)(k)$. It is shown in loc.~cit\emph{.}~that
the functor $\mathcal{F}_{A}$ preserves affine and separated schemes,
respectively. Furthermore, it maps group schemes over $A$ to group
schemes over $k$, schemes smooth over $A$ to schemes smooth over
$k$, and preserves subschemes (of any kind). If $X$ is smooth over
$A$ and $X\times_{A}k$ is reduced and irreducible, then $\mathcal{F}_{A}X$
is reduced and irreducible (\cite{greenberg2}, 2, Coroll.~2).

Suppose that $\mathbf{G}$ is an affine smooth group scheme over $A$.
Thus it is in particular of finite type over $A$. We take the residue
field $k$ to be an algebraic closure $\overline{\mathbb{F}}_{q}$
of the finite field $\mathbb{F}_{q}$ of characteristic $p$. For
any integer $r'$ such that $r\geq r'\geq0$ we define\[
G_{r'}=\mathcal{F}_{A}(\mathbf{G}\times_{A}A/\mathfrak{m}^{r'})(k).\]
Note that for $r'=0$ the ring $A/\mathfrak{m}^{r'}$ is the trivial
ring $\{0=1\}$, and so $G_{0}$ consists of exactly one point. In
particular, for $r'=r$ we write $G$ for the group $G_{r}=(\mathcal{F}_{A}\mathbf{G})(k)\cong\mathbf{G}(A)$.
In general, we shall write group schemes over $A$ in boldface type,
and the corresponding algebraic group over $k$ using the same letter
in normal type. By the results of Greenberg, each group $G_{r'}$
is the $k$-points of an affine smooth group scheme over $k$. It
is thus an affine algebraic group over $k$, connected if $\mathbf{G}\times k$
is. Since $\mathbf{G}$ is smooth it follows that the reduction map
$A\rightarrow A/\mathfrak{m}^{r'}$ induces a surjective homomorphism
$\phi_{r,r'}=\phi_{r'}:G\rightarrow G{}_{r'}$. The kernel of $\phi_{r'}$
is denoted by $G^{r'}$. We have \[
\{1\}=G^{r}\subseteq G^{r-1}\subseteq\dots\subseteq G^{1}\subseteq G^{0}=G.\]
Let $G^{r',*}=G^{r'}-G^{r'+1}$, for $r'<r$. We thus have a partition\[
G=G^{0,*}\sqcup G^{1,*}\sqcup\dots\sqcup G^{r-1,*}\sqcup\{1\}.\]
From now on, let $\mathbf{G}$ be a reductive group scheme over $A$
(cf.~\cite{Demazure}, 2.1 or \cite{SGA3}, XIX 2.7). This means
that $\mathbf{G}$ is an affine smooth group scheme over $A$, such
that its fibre $\mathbf{G}\times k$ is a connected reductive group
in the classical sense. We shall be interested in the situation where
$G$ is endowed with a Frobenius endomorphism $F:G\rightarrow G$,
which in the most general sense is just an endomorphism with finite
fixed point group $G^{F}$.

\begin{rem*}
We show how a situation as above typically arises. Let $A_{0}$ be
an arbitrary finite local ring. Then $A_{0}$ is obviously Artinian
with residue field $\mathbb{F}_{q}$, for some $q$. Let $\mathbb{\mathbf{G}}_{0}$
be a reductive group scheme over $A_{0}$. Then by results of Greenberg,
$A_{0}$ is an algebra over the ring of Witt vectors $W_{n}(\mathbb{F}_{q})$,
where $\chara A_{0}=p^{n+1}$. Let\[
A=A_{0}\otimes_{W_{n}(\mathbb{F}_{q})}W_{n}(\overline{\mathbb{F}}_{q}).\]
Then by \cite{greenberg1}, 1, Prop.~4, $A$ is a local Artinian
ring with residue field $\overline{\mathbb{F}}_{q}$. The algebra
$A$ carries an endomorphism $F$ induced by the Frobenius map of
$W_{n}(\overline{\mathbb{F}}_{q})$. If we now let $\mathbf{G}=\mathbf{G}_{0}\times_{A_{0}}A$,
then $G$ inherits a Frobenius endomorphism from the endomorphism
$F$ on $\mathbb{\mathbf{G}}(A)$ such that $G^{F}\cong\mathbf{G}(A)^{F}$.
Note however that not all Frobenius endomorphisms of $G$ are of this
form; there are also those that give rise to twisted groups.
\end{rem*}
Assume henceforth that $G$ is an algebraic group over $\overline{\mathbb{F}}_{q}$
obtained from a reductive group scheme $\mathbf{G}$ as above, and
provided with a Frobenius endomorphism $F:G\rightarrow G$. Maximal
tori and Borel subgroups exist (resp.~are conjugate) in $\mathbf{G}$
locally for the \'etale topology, that is, after a possible \'etale
extension of scalars (see \cite{Demazure}, 1.5, 3.3 and \cite{SGA3},
XXII 5.2.1-5.2.3, for these notions and facts). Since our base is
a local ring with algebraically closed residue field (i.e., strictly
Henselian), maximal tori exist in $\mathbf{G}$, and any maximal torus
of $\mathbf{G}$ is contained in a Borel subgroup $\mathbf{G}$. If
$\mathbf{T}$ is a maximal torus contained in a Borel subgroup $\mathbf{B}$
of $\mathbf{G}$, we have a semidirect product $\mathbf{B}=\mathbf{T}\mathbf{U}$,
where $\mathbf{U}$ is the unipotent radical of $\mathbf{B}$ (cf.~\cite{SGA3},
XXII 5.11.4). We then have the respective associated algebraic subgroups
$T,B,U$ of $G$, and a semidirect product $B=TU$. Note that for
$r\geq2$, $T$ will not be a maximal torus of $G$ in the sense of
algebraic groups; nor will $B$ be a Borel subgroup of $G$. 

Throughout this paper we fix a prime $l\neq p$. If $X$ is an algebraic
variety over $\overline{\mathbb{F}}_{q}$ we write $H_{c}^{j}(X)$
instead of $H_{c}^{j}(X,\overline{\mathbb{Q}}_{l})$. For a finite
group $\Gamma$, we write $\widehat{\Gamma}$ for the group of linear
characters $\Hom(\Gamma,\overline{\mathbb{Q}}_{l}\vphantom{\overline{\mathbb{Q}}}^{\hspace{-3pt}\times})$.

\section{\label{sec:Lemmas}Lemmas}

Let $\mathbf{T}$ be a maximal torus of $\mathbf{G}$, and let $\Phi=\Phi(\mathbf{G},\mathbf{T})$
be the set of roots of $\mathbf{G}$, relative to $\mathbf{T}$ (cf.~\cite{SGA3},
XIX 3.6). Thus $\Phi$ consists of elements in $\Hom_{A\text{-gr}}(\mathbf{T},(\mathbb{G}_{m})_{A})$
whose image in $\Hom_{k\text{-gr}}(\mathbf{T}\times k,(\mathbb{G}_{m})_{k})$
is a root in the usual sense. We write the groups of characters additively,
and denote by $\mathbf{U}_{\alpha}$ the root subgroup of $\mathbf{G}$
corresponding to the root $\alpha$. 

We shall consider the notion of \emph{splitting} (d\'eploiement)
of a reductive group $\mathbf{G}$ with respect to a maximal torus
$\mathbf{T}$ (cf.~\cite{SGA3}, XXII 1.13, or \cite{Demazure},
3.1), and we call $\mathbf{G}$ \emph{split} (d\'eploy\'e) with
respect to $\mathbf{T}$, if such a splitting exists. A torus over
a general base $\mathbf{S}$ called \emph{trivial }if it is diagonalizable,
that is, if it is isomorphic to some $(\mathbb{G}_{m}^{n})_{\mathbf{S}}$.

\begin{lem}
\label{lem:splitting}A reductive group $\mathbf{G}$ over a strictly
Henselian ring $A$ is split with respect to any of its maximal tori.
\end{lem}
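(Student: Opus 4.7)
The plan is to deduce the lemma from two ingredients: the general isotriviality of tori, and the triviality of finite étale covers of a strictly Henselian base. Let $\mathbf{T}$ be any maximal torus of $\mathbf{G}$. By the structure theory of tori over a general base (SGA~3, X.5), $\mathbf{T}$ is isotrivial, i.e.\ there exists a finite étale surjection $S'\to\Spec A$ such that $\mathbf{T}\times_{A}S'$ is diagonalizable. Since $A$ is strictly Henselian, every connected finite étale cover of $\Spec A$ is isomorphic to $\Spec A$ itself, so $S'\to\Spec A$ admits a section. Pulling back the diagonalization over $S'$ along this section exhibits $\mathbf{T}$ as isomorphic to some $(\mathbb{G}_{m}^{n})_{A}$, that is, as a trivial (split) torus in the sense recalled above.

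With $\mathbf{T}$ known to be split, I would next argue that the splitting of $\mathbf{G}$ with respect to $\mathbf{T}$ (in the sense of SGA~3 XXII 1.13) is automatic. Because $\mathbf{T}$ is trivial, the character group $\Hom_{A\text{-gr}}(\mathbf{T},(\mathbb{G}_{m})_{A})$ is a free abelian group of rank $n$ that is globally defined, so the roots $\Phi(\mathbf{G},\mathbf{T})$ are actual global characters rather than mere étale-local sections. The weight decomposition of $\mathrm{Lie}(\mathbf{G})$ under the adjoint action of $\mathbf{T}$ then produces rank-one locally free $A$-modules $\mathfrak{g}_{\alpha}$, which are free because $A$ is local; a choice of generator in each yields an isomorphism $\mathbf{U}_{\alpha}\cong(\mathbb{G}_{a})_{A}$ of the corresponding root subgroup, compatible with the action of $\mathbf{T}$ via $\alpha$. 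The resulting data constitute a splitting of $(\mathbf{G},\mathbf{T})$.

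The principal obstacle is really only the invocation of isotriviality of tori; once this is granted, the remaining steps are formal consequences of the hypothesis that $A$ is a strictly Henselian local ring. As a shortcut, one can also appeal directly to SGA~3 XXII 2.1 (or to Demazure's thesis, 3.1), which packages both halves of the argument into the statement that a reductive group over a strictly Henselian local base is split with respect to each of its maximal tori.
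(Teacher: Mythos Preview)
Your proof is correct and follows essentially the same route as the paper: first use strict Henselianity to show that the maximal torus, which is a priori only \'etale-locally trivial, is in fact trivial over $A$; then deduce splitness of $(\mathbf{G},\mathbf{T})$ from triviality of $\mathbf{T}$ together with locality of $A$. The paper handles the second step by a direct citation of SGA~3 XXII~2.2, whereas you unpack it (free rank-one root spaces over a local ring, choice of generators giving the pinning); but the structure of the argument is the same.
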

\begin{proof}
Over a strictly Henselian base $A$, any property that holds locally
for the \'etale topology holds already over $A$. A maximal torus
of $\mathbf{G}$ is trivial locally for the \'etale topology (\cite{Demazure}
1.4.5), and is thus trivial over $A$. Since $A$ is local, the result
follows from \cite{SGA3}, XXII 2.2.
\end{proof}
The existence of splitting implies that the root data of $\mathbf{G}$
relative to $\mathbf{T}$ is canonically isomorphic to the root data
of $\mathbf{G}\times k$ relative to $\mathbf{T}\times k$ (cf.~\cite{SGA3}
XXII 1.15 b)). In particular, the map $\Hom_{A\text{-gr}}(\mathbf{T},(\mathbb{G}_{m})_{A})\rightarrow\Hom_{k\text{-gr}}(\mathbf{T}\times k,(\mathbb{G}_{m})_{k})$
is a bijection on the roots. As for algebraic groups over fields,
a choice of Borel subgroup $\mathbf{B}$ of $\mathbf{G}$ containing
$\mathbf{T}$ defines a set of positive roots $\Phi^{+}$, and the
splitting of $\mathbf{G}$ with respect to $\mathbf{T}$ implies that
for some fixed but arbitrary ordering of $\Phi^{+}$ we have\[
\mathbf{U}=\prod_{\alpha\in\Phi^{+}}\mathbf{U}_{\alpha}\]
(see \cite{Demazure}, 3.3.3). On the level of groups of points this
yields $U=\prod_{\alpha\in\Phi^{+}}U_{\alpha}$, where an element
of $U$ is expressed uniquely as a product of elements of the $U_{\alpha}$.

From now on, let $\mathbf{T}$ and $\mathbf{T}'$ be two maximal tori
of $\mathbf{G}$ such that the corresponding subgroups $T$ and $T'$
of $G$ are $F$-stable. Let $\mathbf{U}$ (resp.~$\mathbf{U'}$)
be the unipotent radical of a Borel subgroup of $\mathbf{G}$ that
contains $\mathbf{T}$ (resp.~$\mathbf{T}'$), and let $U$ and $U'$
be the corresponding subgroups of $G$. Note that $U$ and $U'$ are
not necessarily $F$-stable. 

Let $N(T_{1},T_{1}')=\{g\in G_{1}\mid g^{-1}T_{1}g=T_{1}'\}$. Then
$T_{1}$ acts on $N(T_{1},T_{1}')$ by left multiplication and $T_{1}'$
acts on $N(T_{1},T_{1}')$ by right multiplication. The orbits of
$T_{1}$ are in natural bijection with the orbits of $T_{1}'$. We
set $W(T_{1},T_{1}')=T_{1}\backslash N(T_{1},T_{1}')\cong N(T_{1},T_{1}')/T_{1}'$;
this is a finite set because if $a\in G_{1}$ is an element such that
${}^{a}T_{1}=T_{1}'$, then $g\mapsto ga$ gives a bijection between
$N(T_{1},T_{1}')$ and the normalizer $N_{G_{1}}(T_{1})=N(T_{1},T_{1})$,
and this induces a bijection between $W(T_{1},T_{1}')$ and the Weyl
group $W(T_{1})=W(T_{1},T_{1})$. For each $w\in W(T_{1},T_{1}')$
we choose a representative $\dot{w}\in N(T_{1},T_{1}')$. Since the
normalizer $N_{\mathbf{G}}(\mathbf{T})$ is smooth over $A$ (cf.~\cite{Demazure},
1.5.1), the map $\phi_{1}$ induces a surjection\[
N_{\mathbf{G}}(\mathbf{T})(A)\longrightarrow N_{\mathbf{G}}(\mathbf{T})(k).\]
By the definition of the normalizer group scheme and the fact that
$k$ is algebraically closed, we have $N_{\mathbf{G}}(\mathbf{T})(A)\subseteq N_{\mathbf{G}(A)}(\mathbf{T}(A))$
and $N_{\mathbf{G}}(\mathbf{T})(k)=N_{\mathbf{G}(k)}(\mathbf{T}(k))$
(cf.~\cite{Jantzen}, I 2.6). Thus $\phi_{1}$ also induces a surjection\[
N_{G}(T)\cong N_{\mathbf{G}(A)}(\mathbf{T}(A))\longrightarrow N_{\mathbf{G}(k)}(\mathbf{T}(k))=N_{G_{1}}(T_{1}).\]
Let $N(T,T')=\{g\in G\mid g^{-1}Tg=T'\}$. It follows from the conjugacy
of maximal tori (\cite{Demazure}, 1.5.3) that $T$ and $T'$ are
conjugate in $G$ by an element whose image in $G_{1}$ conjugates
$T_{1}$ to $T_{1}'$. Thus we have in the same way as above, a bijection
between $N(T,T')$ and $N_{G}(T)$, and hence a surjection $N(T,T')\rightarrow N(T_{1},T_{1}')$
(this also follows from the smoothness of transporters \cite{SGA3},
XXII 5.3.9). For each $\dot{w}\in N(T_{1},T_{1}')$ we can therefore
choose a lift $\hat{w}\in N(T,T')$, and throughout this paper we
shall work with a fixed set of lifts $\hat{w}$. As we shall see,
the main results are independent of the choice of these lifts.

Define the variety\[
\Sigma=\{(x,x',y)\in F(U)\times F(U')\times G\mid xF(y)=yx'\}.\]
The Bruhat decomposition in $G_{1}$ implies that there is a bijection
between double $B_{1}$-$B_{1}$ cosets indexed by $W(T_{1},T_{1}')$,
and double $B_{1}$-$B_{1}$ cosets indexed by $W(T_{1})$. Indeed,
if $w\in W(T_{1},T_{1}')$ and if $a\in G_{1}$ is such that ${}^{a}T_{1}=T_{1}'$,
then the map $B_{1}wB_{1}\mapsto B_{1}waB_{1}$ is injective since
if $B_{1}w'B_{1}$ has the same image as $B_{1}wB_{1}$, then $B_{1}waB_{1}=B_{1}w'aB_{1}$,
so by Bruhat decomposition, $\dot{w}a$ and $\dot{w}'a$ have the
same image in $W(T_{1})$, that is, $\dot{w}$ and $\dot{w}'$ have
the same image in $W(T_{1},T_{1}')$. We thus have $G_{1}=\bigsqcup_{w\in W(T_{1},T_{1}')}G_{1,w}$,
where $G_{1,w}=U_{1}T_{1}\dot{w}U_{1}'=U_{1}\dot{w}T_{1}'U_{1}'$.
Let $G_{w}$ be the inverse image of $G_{1,w}$ under $\phi_{1}:G\rightarrow G_{1}$
and let \[
\Sigma_{w}=\{(x,x',y)\in\Sigma\mid y\in G_{w}\}.\]
This defines a partition of $\Sigma$. The group $T^{F}\times T'^{F}$
acts on $\Sigma$ by $(t,t'):(x,x',y)\mapsto(txt^{-1},t'x't'^{-1},tyt'^{-1})$.
This restricts to an action of $T^{F}\times T'^{F}$ on $\Sigma_{w}$
for any $w\in W(T_{1},T_{1}')$.

If $\theta\in\widehat{T^{F}}$, $\theta'\in\widehat{T'^{F}}$, and
$M$ is a $T^{F}\times T'^{F}$-module, we shall write $M_{\theta^{-1},\theta}$
for the subspace of $M$ on which $T^{F}\times T'^{F}$ acts according
to $\theta^{-1}\boxtimes\theta'$, that is, \[
M_{\theta^{-1},\theta'}=\{m\in M\mid(t,t')m=\theta^{-1}(t)\theta'(t')m,\ \forall\,(t,t')\in T^{F}\times T'^{F}\}.\]
\renewcommand{\labelenumi}{(\alph{enumi})}

\begin{lem}
\label{lem:commutator-Iwahori}Let $\mathbf{G}$ be an affine group
scheme over a local ring $A$ with maximal ideal $\mathfrak{m}$.
For $i\geq0$, write $G=\mathbf{G}(A)$, $G_{i}=\mathbf{G}(A/\mathfrak{m}^{i})$,
and $G^{i}=\Ker(G\rightarrow G_{i})$. Then the following holds:
\begin{enumerate}
\item For any integers $i,j\geq0$ we have the commutator relation $[G^{i},G^{j}]\subseteq G^{i+j}$.
\item (Iwahori decomposition) Assume in addition that $\mathbf{G}$ is reductive
and split over $A$, with respect to a maximal torus $\mathbf{T}$.
Let $\mathbf{T}$ be contained in a Borel subgroup with unipotent
radical $\mathbf{U}$, and let $\mathbf{U}^{-}$ be the unipotent
radical of a Borel subgroup of $\mathbf{G}$ containing $\mathbf{T}$,
such that $\mathbf{U}\cap\mathbf{U}^{-}=\{1\}$. Let $T$, $U$, $U^{-}$
be the respective groups of $A$-points, and let $T^{1}$, $U^{1}$
and $(U^{-})^{1}$ be the respective kernels. Then we have\[
G^{1}=(U^{-})^{1}T^{1}U^{1},\]
and each element $g\in G^{1}$ decomposes uniquely as $g=u^{-}tu$,
where $u^{-}\in(U^{-})^{1}$, $t\in T^{1}$, and $u\in U^{1}$. $ $
\end{enumerate}
\end{lem}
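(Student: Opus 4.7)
The plan is to handle (a) and (b) by quite different techniques: (a) is a Hopf algebra computation, while (b) rests on the big cell of a split reductive group scheme.

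For part (a), I would first reduce to the case $\mathfrak{m}^{i+j}=0$ by base-changing to $A/\mathfrak{m}^{i+j}$. By functoriality, the image of $G^{i}$ in $\mathbf{G}(A/\mathfrak{m}^{i+j})$ lies in the kernel of reduction to $\mathbf{G}(A/\mathfrak{m}^{i})$, and the claim $[g,h]\in G^{i+j}$ becomes $[g,h]=1$ after this reduction. Writing $\mathbf{G}=\Spec R$ with counit $\epsilon$ and augmentation ideal $I=\ker\epsilon$, an element $g\in G^{i}$ is an $A$-algebra map $R\to A$ with $g(I)\subseteq\mathfrak{m}^{i}$. The comultiplication satisfies $\Delta(f)-f\otimes 1-1\otimes f\in I\otimes I$ for $f\in I$, so for $g\in G^{i}$, $h\in G^{j}$,
\[
(gh)(f)=(g\otimes h)(\Delta f)\equiv g(f)+h(f)\pmod{\mathfrak{m}^{i}\mathfrak{m}^{j}},
\]
since the remainder lies in $g(I)h(I)\subseteq\mathfrak{m}^{i+j}=0$. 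The same formula gives $(hg)(f)$, so $gh=hg$ in $\mathbf{G}(A/\mathfrak{m}^{i+j})$, and therefore $[g,h]\in G^{i+j}$ in the original group.

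For part (b), I would appeal to the big cell of the split reductive group scheme $\mathbf{G}$. By the splitting hypothesis and SGA 3, XXII (the standard open immersion for the big cell), the multiplication map $\mathbf{U}^{-}\times_{A}\mathbf{T}\times_{A}\mathbf{U}\to\mathbf{G}$ is an open immersion onto an open subscheme $\Omega\subseteq\mathbf{G}$. Now $A$ is local, so an $A$-point of $\mathbf{G}$ factors through the open subscheme $\Omega$ if and only if its closed point does. Any $g\in G^{1}$ reduces to the identity in $G_{1}=\mathbf{G}(k)$, which lies in $\Omega(k)$, so $g\in\Omega(A)$. Hence $g=u^{-}tu$ with $u^{-}\in U^{-}$, $t\in T$, $u\in U$, and the decomposition is unique because the corresponding map of schemes is an isomorphism on $\Omega$. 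Reducing modulo $\mathfrak{m}$ and using uniqueness of the big-cell decomposition over $k$ together with the fact that $g$ reduces to $1 = 1\cdot 1\cdot 1$, each factor $u^{-},t,u$ must reduce to the identity, giving $u^{-}\in(U^{-})^{1}$, $t\in T^{1}$, $u\in U^{1}$. The reverse inclusion $(U^{-})^{1}T^{1}U^{1}\subseteq G^{1}$ is immediate.

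The main obstacle is not the Hopf algebra manipulation in (a), which is essentially formal once the reduction step is set up, but rather marshalling the correct form of the big cell for (b): one needs the splitting hypothesis to ensure the product map is an open immersion over $A$ (not merely over $k$), and one must carefully use that $\Spec A$ has a unique closed point to lift the set-theoretic statement over $k$ to factorization of $A$-points through $\Omega$. With these ingredients in place, both the existence and uniqueness of the Iwahori decomposition of an element of $G^{1}$ follow directly from the scheme-theoretic isomorphism $\mathbf{U}^{-}\times_{A}\mathbf{T}\times_{A}\mathbf{U}\xrightarrow{\sim}\Omega$.
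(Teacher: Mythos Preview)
Your proof is correct. Part~(a) is essentially the paper's argument: the same Hopf-algebra identity $\Delta(f)\equiv f\otimes 1+1\otimes f\pmod{I\otimes I}$ is used to show that $gh$ and $hg$ agree on $I$ modulo $\mathfrak{m}^{i+j}$; your preliminary base change to $A/\mathfrak{m}^{i+j}$ is a minor organizational difference.

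Part~(b), however, takes a genuinely different route from the paper. The paper invokes the Bruhat-type covering $G=\bigcup_{w\in W} n_w U^{-}TU$ from SGA~3, XXII 5.7.4, and then argues via the Bruhat decomposition in $G_1$ and a comparison of Weyl groups that an element of $G^1$ must lie in the cell with $n_w=1$; uniqueness is deduced separately from $U^{-}\cap B=\{1\}$. Your argument instead uses the big cell $\Omega\cong\mathbf{U}^{-}\times_A\mathbf{T}\times_A\mathbf{U}$ as an \emph{open subscheme} of $\mathbf{G}$, together with the standard fact that over a local base an $A$-point factors through an open subscheme iff its closed point does. This is shorter and more geometric: existence and uniqueness come in one stroke from the scheme-theoretic isomorphism onto $\Omega$, and the Weyl-group bookkeeping is avoided entirely. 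The paper's approach, on the other hand, makes explicit contact with the Bruhat decomposition that is used repeatedly elsewhere in the paper, so it fits more naturally into the surrounding narrative. Either argument needs the splitting hypothesis (the paper to have the cell decomposition over $A$, you to have the big cell be an open immersion over $A$), so neither buys extra generality.
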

\begin{proof}
We prove (a) using a Hopf algebra approach. Let $A[\mathbf{G}]$ be
the affine algebra of $\mathbf{G}$; thus $A[\mathbf{G}]$ is a commutative
Hopf algebra over $A$. Let $\Delta:A[\mathbf{G}]\rightarrow A[\mathbf{G}]\otimes A[\mathbf{G}]$
and $\epsilon:A[\mathbf{G}]\rightarrow A$ denote its coproduct and
counit, respectively. Let $I=\Ker\epsilon$ be the augmentation ideal.
If $\alpha:A\rightarrow R$ is an $A$-algebra, then the identity
element of the group $G(R)=\Hom(A[\mathbf{G}],R)$ is given by $\alpha\circ\epsilon$.
For any $i\geq0$, the reduction map $\phi_{i}:G=\Hom(A[\mathbf{G}],A)\rightarrow\Hom(A[\mathbf{G}],A/\mathfrak{m}^{i})=G_{i}$
sends any $g\in G$ to $\phi_{i}\circ g$. Now let $g\in G^{i}$ and
$h\in G^{j}$, for some integers $i,j\geq0$. Then \[
\phi_{i}\circ g=\phi_{i}\circ\epsilon,\quad\text{and}\quad\phi_{j}\circ g=\phi_{j}\circ\epsilon,\]
respectively (recall that $\phi_{i}$ denotes both the map $G\rightarrow G_{i}$
and map $A\rightarrow A/\mathfrak{m}^{i}$). Thus $\phi_{i}(g(I))=0$,
that is, $g(I)\subseteq\mathfrak{m}^{i}$. Similarly, we have $h(I)\subseteq\mathfrak{m}^{j}$.
Since $a\mapsto a\cdot1:A\rightarrow A[\mathbf{G}]$ is a section
of $\epsilon$, we have $A[\mathbf{G}]=A\cdot1\oplus I$, as $A$-modules.
This implies that $A[\mathbf{G}]\otimes A[\mathbf{G}]=A(1\otimes1)\oplus(A\otimes I)\oplus(I\otimes A)\oplus(I\otimes I).$
Let $x\in I$, and write $\Delta(x)=a_{1}(1\otimes1)+a_{2}\otimes y_{1}+y_{2}\otimes a_{3}+y_{3}\otimes y_{4}$,
where $a_{k}\in A$, $y_{k}\in I$. The Hopf algebra axiom $(\epsilon\circ\text{id})\circ\Delta=\text{id}=(\text{id}\circ\epsilon)\circ\Delta$,
implies that $a_{1}+a_{2}y_{1}=a_{1}+y_{2}a_{3}=x\in I$, and so $a_{1}\in I$,
that is, $a_{1}=0$, and $x=a_{2}y_{1}=y_{2}a_{3}$. Hence $\Delta(x)\in a_{2}\otimes y_{1}+y_{2}\otimes a_{3}+I\otimes I=1\otimes a_{2}y_{1}+y_{2}a_{3}\otimes1+I\otimes I$,
and so we have \[
\Delta(x)\in x\otimes1+1\otimes x+I\otimes I,\qquad\text{for all }x\in I.\]
The product $gh\in G$ is given by the element $(g\otimes h)\circ\Delta\in\Hom(A[\mathbf{G}],A)$.
Hence\[
gh(x)\in g(x)+h(x)+g(I)h(I)\subseteq g(x)+h(x)+\mathfrak{m}^{i+j},\qquad\text{for all }x\in I,\]
and so $(\phi_{i+j}\circ(gh-g-h))(I)=0$. Thus the map $\phi_{i+j}\circ(gh-g-h)$
factors through $\epsilon$, and since $\phi_{i+j}$ is the unique
$A$-algebra map $A\rightarrow A/\mathfrak{m}^{i+j}$, we must have
$\phi_{i+j}\circ(gh-g-h)=\phi_{i+j}\circ\epsilon$. This means exactly
that the element $gh-g-h\in\Hom(A[\mathbf{G}],A)$ lies in the kernel
$G^{i+j}$. We thus see that $gh=g+h=h+g=hg$, modulo $G^{i+j}$,
and the result follows. 

We now prove (b). Let $W$ be the group generated by simple reflections
corresponding to the root system of $\mathbf{G}$ relative to $\mathbf{T}$
(cf.~\cite{SGA3}, XXI 1.1.8). By \cite{SGA3} XXII 3.3 resp.~3.8
we have a natural inclusion $W\subseteq N_{\mathbf{G}}(\mathbf{T})(A)/\mathbf{T}(A)$
resp.~surjection $N_{\mathbf{G}}(\mathbf{T})(A)\rightarrow(N_{\mathbf{G}}(\mathbf{T})/\mathbf{T})(A)$.
For any $w\in W$ we can thus choose a lift $n_{w}\in N_{\mathbf{G}}(\mathbf{T})(A)$.
Since $A$ is local, we have \[
G=\bigcup_{w\in W}n_{w}U^{-}TU,\]
(cf.~\cite{SGA3}, XXII 5.7.4 (ii) and also 5.7.5 (ii)). In particular,
we may take $n_{1}=1$ as a representative for the trivial element
$1\in W$. 

Now, if $\phi_{1}(n_{w}u^{-}tu)=1$, for some $w\in W$, $u^{-}\in U^{-}$,
$t\in T$, $u\in U$, then $B_{1}\phi_{1}(n_{w})U_{1}^{-}\subseteq B_{1}U_{1}^{-}$,
and the Bruhat decomposition in $G_{1}$ with respect to the Borel
subgroups $B_{1}$ and $B_{1}^{-}$ implies that $\phi_{1}(n_{w})\in T_{1}$.
Hence $\phi_{1}(u^{-})=\phi_{1}(n_{w}t)=\phi_{1}(u)=1$. Let $k$
be the residue field of $A$. The morphism $ $$N_{\mathbf{G}}(\mathbf{T})\rightarrow N_{\mathbf{G}}(\mathbf{T})/\mathbf{T}$
yields a commutative diagram $$\begin{CD} N_{\mathbf{G}}(\mathbf{T})(A) @>>> (N_{\mathbf{G}}(\mathbf{T})/\mathbf{T})(A)\\
@V\phi_1VV @VV\phi_1V\\
N_{\mathbf{G}}(\mathbf{T})(k) @>>> (N_{\mathbf{G}}(\mathbf{T})/\mathbf{T})(k).\end{CD}$$ Since $\mathbf{G}$ is split reductive, its root datum is canonically
isomorphic to the root datum of $\mathbf{G}\times k$ (\cite{SGA3},
XXII 1.15 b)), and hence the map $\phi_{1}:(N_{\mathbf{G}}(\mathbf{T})/\mathbf{T})(A)\rightarrow(N_{\mathbf{G}}(\mathbf{T})/\mathbf{T})(k)$
restricts to an isomorphism between $W$ and the Weyl group of the
root datum of $\mathbf{G}\times k$ considered as a subgroup of $(N_{\mathbf{G}}(\mathbf{T})/\mathbf{T})(k)$.
The image of $\phi_{1}(n_{w})\in T_{1}\subseteq N_{\mathbf{G}}(\mathbf{T})(k)$
in $(N_{\mathbf{G}}(\mathbf{T})/\mathbf{T})(k)$ is trivial, and by
the commutativity of the above diagram, the image of $n_{w}$ in $W\subseteq N_{\mathbf{G}}(\mathbf{T})(A)/\mathbf{T}(A)$
is thus the trivial element. It follows from our choice of representatives
that $n_{w}=n_{1}=1$, whence $G^{1}=(U^{-})^{1}T^{1}U^{1}$. Uniqueness
follows immediately, since if $u^{-}tu=u_{1}^{-}t_{1}u_{1}$, then
$(u^{-})^{-1}u_{1}^{-}\in U^{-}\cap B=\{1\}$, and similarly, $t^{-1}t_{1}=u^{-1}u_{1}=1$.
\end{proof}
\begin{rem*}
For basic facts about Hopf algebras we have followed \cite{Jantzen},
2.4. In the present paper we apply Iwahori decomposition only in the
case of a reductive group scheme over a strictly Henselian base, and
such groups are split by Lemma~\ref{lem:splitting}.
\end{rem*}
We return to our situation where $\mathbf{G}$ is a reductive group
scheme over the Artinian local ring $A$, with finite residue field
$k$. Using the isomorphism $G=(\mathcal{F}_{A}\mathbf{G})(k)\cong\mathbf{G}(A)$
together with Lemma~\ref{lem:commutator-Iwahori} we get corresponding
commutator relations and Iwahori decomposition in $G$.

We now prove a result which is a form of Bruhat decomposition for
$G$, and is both a strengthened and a generalized form of a result
of Hill (cf.~\cite{Hill_regular}, 2.6). Let $\mathbf{U}^{-}$ (resp.
$\mathbf{U}'^{-}$) be the unipotent radical of a Borel subgroup of
$\mathbf{G}$ containing $\mathbf{T}$ (resp. $\mathbf{T}'$) such
that $\mathbf{U}\cap\mathbf{U}^{-}=\{1\}$ (resp. $\mathbf{U}'\cap\mathbf{U}'^{-}=\{1\}$).
Let $U^{-}=(\mathcal{F}_{A}\mathbf{U}^{-})(k)$ and $U'^{-}=(\mathcal{F}_{A}\mathbf{U}'^{-})(k)$
be the corresponding subgroups of $G$.

\begin{lem}
\label{lem:Unique decomposition}Let $U,U',U^{-},U'^{-}$ be as above.
Then $G$ decomposes as\[
G=\bigsqcup_{w\in W(T_{1},T_{1}')}(U\cap\hat{w}U'^{-}\hat{w}^{-1})\hat{w}T'((U'^{-})^{1}\cap\hat{w}^{-1}U^{-}\hat{w})U',\]
and every element $g\in G$ can be written uniquely in the form $g=u\hat{w}t'ku'$,
where $u\in U\cap\hat{w}U'^{-}\hat{w}^{-1}$, $t'\in T'$, $k\in(U'^{-})^{1}\cap\hat{w}^{-1}U^{-}\hat{w}$,
and $u_{}'\in U'$.
\end{lem}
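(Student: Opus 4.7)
The plan is to reduce to two ingredients: the refined Bruhat decomposition in the finite-type reductive group $G_1$ (which pins down $w$ and the $T'$/$U$/$U'$-parts modulo $G^1$), together with the Iwahori decomposition of $G^1$ from Lemma~\ref{lem:commutator-Iwahori}(b) (which accounts for the $G^1$-remainder). For brevity set $U_w = U \cap \hat{w}U'^{-}\hat{w}^{-1}$, $V_w = \hat{w}^{-1}U\hat{w} \cap U'^{-}$ (so $\hat{w}V_w\hat{w}^{-1}=U_w$), and $K_w = (U'^{-})^1 \cap \hat{w}^{-1}U^{-}\hat{w}$. Since conjugation by $\hat{w}$ carries $T'$ onto $T$, the subgroups $\hat{w}^{-1}U\hat{w}$ and $\hat{w}^{-1}U^{-}\hat{w}$ are the unipotent radicals of two opposite Borels containing $T'$; the splitting of $\mathbf{G}$ with respect to $\mathbf{T}'$ (Lemma~\ref{lem:splitting}) then allows one to express $U'^{-}$ as a product of root subgroups with respect to $T'$, giving a unique factorization $U'^{-} = V_w(U'^{-}\cap\hat{w}^{-1}U^{-}\hat{w})$. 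Intersecting with $G^1$ yields $(U'^{-})^1 = V_w^1 K_w$ uniquely. Note also that $T'$ normalizes each of $V_w$, $K_w$, $U'$ and $G^1$.

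For existence, given $g \in G$, the first step is to apply the refined Bruhat in $G_1$ to obtain the unique $w \in W(T_1,T_1')$ with $\phi_1(g) \in (U_1 \cap \dot{w}U_1'^{-}\dot{w}^{-1})\dot{w}T_1'U_1'$. Using the surjectivity of $\phi_1$ on the relevant smooth subgroup schemes (in particular on $U_w$, which is a product of root subgroups), I would lift to $u \in U_w$, $t' \in T'$, $u' \in U'$ with $\phi_1(u\hat{w}t'u') = \phi_1(g)$, so that $h := (u\hat{w}t')^{-1}g(u')^{-1}$ lies in $G^1$. Then Iwahori decomposition, applied with $\mathbf{T}',\mathbf{U}',\mathbf{U}'^{-}$ in place of $\mathbf{T},\mathbf{U},\mathbf{U}^{-}$, writes $h = v^{-}sv$ with $v^{-} \in (U'^{-})^1$, $s \in (T')^1$, $v \in (U')^1$; a further factorization $s^{-1}v^{-}s = v_1 k$ with $v_1 \in V_w^1$, $k \in K_w$ is available from the subgroup decomposition above. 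Rearranging,
\[
g = u\hat{w}t' h u' = u \bigl[\hat{w}(t's) v_1 (t's)^{-1} \hat{w}^{-1}\bigr] \hat{w}(t's)\, k\, (vu').
\]
The bracketed conjugate lies in $\hat{w}V_w^1\hat{w}^{-1} = U_w^1 \subseteq U_w$ (since $T'$ normalizes $V_w^1$), and so absorbs into $u$ to produce a new element of $U_w$; meanwhile $t's \in T'$, $k \in K_w$, and $vu' \in (U')^1 U' \subseteq U'$, yielding $g \in U_w \hat{w} T' K_w U'$.

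For uniqueness and disjointness, suppose $u_1\hat{w}_1 t_1' k_1 u_1' = u_2 \hat{w}_2 t_2' k_2 u_2'$ with the prescribed types. Applying $\phi_1$ annihilates the $k_i$ and, by uniqueness of the refined Bruhat in $G_1$, forces $w_1 = w_2 = w$ together with $\phi_1(u_1) = \phi_1(u_2)$, $\phi_1(t_1') = \phi_1(t_2')$, $\phi_1(u_1') = \phi_1(u_2')$. Conjugating the equation by $\hat{w}$ and using normalization by $T'$ of $V_w$, $K_w$, $U'$ and $G^1$, one arrives at an identity in $G^1$ of the shape $\alpha^{*} k_1^{*} \gamma^{*} = k_2 \beta^{-1}$, with $\alpha^{*} \in V_w^1$, $k_1^{*} \in K_w$, $\beta \in (T')^1$, and $\gamma^{*} \in (U')^1$. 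Both sides are now in Iwahori form relative to $(U'^{-})^1 = V_w^1 K_w$, $(T')^1$ and $(U')^1$, so uniqueness of Iwahori decomposition (Lemma~\ref{lem:commutator-Iwahori}(b)) gives $\beta = 1$, $\gamma^{*} = 1$, and $\alpha^{*} k_1^{*} = k_2$; combined with $V_w \cap K_w \subseteq \hat{w}^{-1}(U \cap U^{-})\hat{w} = \{1\}$ this forces $\alpha^{*} = 1$ and $k_1 = k_2$, whence $u_1 = u_2$, $t_1' = t_2'$, $u_1' = u_2'$. The main obstacle is the bookkeeping in the existence step: one must argue that the unwanted $V_w^1$-piece produced by the sub-decomposition of $(U'^{-})^1$ can always be absorbed into $U_w$ after conjugation by $\hat{w}t'$, which relies critically on the normalization of $V_w$ by $T'$ and on the identity $\hat{w}V_w\hat{w}^{-1} = U_w$.
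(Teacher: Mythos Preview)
Your existence argument is essentially the same as the paper's: lift the refined Bruhat decomposition of $G_1$ through $\phi_1$, apply Iwahori decomposition (Lemma~\ref{lem:commutator-Iwahori}(b)) to the $G^1$-remainder with respect to $(\mathbf{T}',\mathbf{U}',\mathbf{U}'^{-})$, split the $(U'^{-})^1$-factor as $V_w^1 K_w$ via the root-group product formula, and absorb the $V_w^1$-part into $U_w$ after conjugation by $\hat{w}$. The paper writes this more tersely, first observing $G=\bigsqcup_w U_w\hat{w}G^1T'U'$ and then rewriting $G^1$ all at once, but the content is the same.

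Where you genuinely diverge is in the uniqueness step. You reduce modulo $\mathfrak{m}$ to pin down $w$ and the components modulo $G^1$, and then invoke the uniqueness clause of Iwahori decomposition together with $V_w\cap K_w=\{1\}$ to finish. The paper instead argues entirely inside $G$ by two intersection computations: from $u\hat{w}t'ku'=u_1\hat{w}t_1'k_1u_1'$ one first isolates $u'u_1'^{-1}\in U'\cap T'U'^{-}=\{1\}$, and then $\hat{w}^{-1}u^{-1}u_1\hat{w}\in \hat{w}^{-1}U\hat{w}\cap\hat{w}^{-1}TU^{-}\hat{w}=\{1\}$, which immediately forces all components to agree (and in particular gives disjointness of the $w$-pieces without ever passing to $G_1$). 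The paper's route is shorter and avoids the somewhat delicate bookkeeping you flag in your last paragraph; your route is correct but heavier, and the precise identity ``$\alpha^{*}k_1^{*}\gamma^{*}=k_2\beta^{-1}$'' you write down does not match the equation one actually obtains (one gets $\alpha^{*}k_1^{*}\beta=k_2\gamma$ after commuting $\beta$ past $k_1$), though Iwahori uniqueness still applies and yields the conclusion.
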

\begin{proof}
In the case $r=1$ the result is a well-known consequence of Bruhat's
lemma. Using the surjection $\phi_{r}$ we lift the decomposition
to $G$, and so we may write \[
G=\bigsqcup_{w\in W(T_{1},T_{1}')}(U\cap\hat{w}U'^{-}\hat{w}^{-1})\hat{w}G^{1}T'U'.\]
Now, by Iwahori decomposition we have $G^{1}=(U'^{-})^{1}T^{1}(U')^{1}$,
so \[
(U\cap\hat{w}U'^{-}\hat{w}^{-1})\hat{w}G^{1}T'U'=(U\cap\hat{w}U'^{-}\hat{w}^{-1})\hat{w}(U'^{-})^{1}T'U'.\]
 The formula $U=\prod_{\alpha\in\Phi^{+}}U_{\alpha}$ implies that
we may write\[
(U'^{-})^{1}=((U'^{-})^{1}\cap\hat{w}^{-1}U\hat{w})((U'^{-})^{1}\cap\hat{w}^{-1}U^{-}\hat{w}),\]
and since $\hat{w}((U'^{-})^{1}\cap\hat{w}^{-1}U\hat{w})\hat{w}^{-1}\in(U\cap\hat{w}U'^{-}\hat{w}^{-1})$,
we have \[
G=\bigsqcup_{w\in W(T_{1},T_{1}')}(U\cap\hat{w}U'^{-}\hat{w}^{-1})\hat{w}((U'^{-})^{1}\cap\hat{w}^{-1}U^{-}\hat{w})T'U'.\]
Since $T'$ normalizes $(U'^{-})^{1}\cap\hat{w}^{-1}U^{-}\hat{w}$,
we get the desired decomposition. Now let $u\hat{w}t'ku'=u_{1}\hat{w}t'_{1}k_{1}u'_{1}$,
for $u,u_{1}\in U\cap\hat{w}U'^{-}\hat{w}^{-1}$, $t',t'_{1}\in T'$,
$k,k_{1}\in(U'^{-})^{1}\cap\hat{w}^{-1}U^{-}\hat{w}$, and $u',u'_{1}\in U'$.
Then $u'u_{1}'^{-1}=(u\hat{w}t'k)^{-1}u_{1}\hat{w}t_{1}'k_{1}$, and
since $u'u_{1}'^{-1}\in U'$ and $(u\hat{w}t'k)^{-1}u_{1}\hat{w}t_{1}'k_{1}\in T'U'^{-}$
we must have $u'=u'_{1}$ and $u\hat{w}t'k=u_{1}\hat{w}t_{1}'k_{1}$,
or equivalently $t'kk_{1}^{-1}t_{1}'^{-1}=\hat{w}^{-1}u^{-1}u_{1}\hat{w}$.
Since $t'kk_{1}^{-1}t_{1}'^{-1}\in T'\hat{w}^{-1}U{}^{-}\hat{w}=\hat{w}^{-1}TU^{-}\hat{w}$
and $\hat{w}^{-1}u^{-1}u_{1}\hat{w}\in\hat{w}^{-1}U\hat{w}$, we conclude
that $t'k=t'_{1}k_{1}$ and $u=u_{1}$. Thus also $t'=t'_{1}$ and
$k=k_{1}$, and the lemma is proved.
\end{proof}
If $\mathcal{T}$ is a commutative algebraic group over $\overline{\mathbb{F}}_{q}$
with fixed $\mathbb{F}_{q}$-structure and with Frobenius map $F:\mathcal{T}\rightarrow\mathcal{T}$,
then for any integer $n\geq1$ we have a norm map\[
N_{F}^{F^{n}}:\mathcal{T}^{F^{n}}\longrightarrow\mathcal{T}^{F},\qquad t\longmapsto tF(t)F^{2}(t)\cdots F^{n-1}(t).\]

\begin{lem}
\label{lem:Lusztig's 1.1}Let $\mathcal{T}$ and $\mathcal{T}'$ be
two commutative connected algebraic groups over $\overline{\mathbb{F}}_{q}$
with fixed $\mathbb{F}_{q}$-rational structures with Frobenius maps
$F:\mathcal{T}\rightarrow\mathcal{T},$ and $F:\mathcal{T}'\rightarrow\mathcal{T}'$.
Let $f:\mathcal{T}\iso\mathcal{T}'$ be an isomorphism of algebraic
groups over $\overline{\mathbb{F}}_{q}$. Let $n\geq1$ be such that
$F^{n}f=fF^{n}:\mathcal{T}\rightarrow\mathcal{T}'$; thus $f:\mathcal{T}^{F^{n}}\iso\mathcal{T}'^{F^{n}}$.
Let\[
H=\{(t,t')\in\mathcal{T}\times\mathcal{T}'\mid f(F(t)^{-1}t)=F(t')^{-1}t'\},\]
(a subgroup of $\mathcal{T}\times\mathcal{T}'$ containing $\mathcal{T}^{F}\times\mathcal{T}'^{F}$).
Let $\theta\in\widehat{\mathcal{T}^{F}}$ and $\theta'\in\widehat{\mathcal{T}^{'F}}$
be such that $\theta^{-1}\boxtimes\theta'$ is trivial on $(\mathcal{T}^{F}\times\mathcal{T}'^{F})\cap H^{0}$.
Then $\theta N_{F}^{F^{n}}=\theta'N_{F}^{F^{n}}f\in\widehat{\mathcal{T}^{F^{n}}}$
.
\end{lem}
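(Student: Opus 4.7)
The plan is to reduce the identity of characters to the hypothesis on $H^0$. Concretely, I would show that for every $t_0 \in \mathcal{T}^{F^n}$ the pair
\[
\bigl(N_F^{F^n}(t_0),\, N_F^{F^n}(f(t_0))\bigr)
\]
lies in $(\mathcal{T}^F \times \mathcal{T}'^F) \cap H^0$; the assumption that $\theta^{-1} \boxtimes \theta'$ is trivial on that intersection then directly gives $\theta(N_F^{F^n}(t_0)) = \theta'(N_F^{F^n}(f(t_0)))$, which is the required identity.

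To exhibit this pair, I would invoke Lang's theorem on the connected commutative groups $\mathcal{T}$ and $\mathcal{T}'$ to pick $a \in \mathcal{T}$ with $L(a) := F(a)^{-1} a = t_0$ and $a' \in \mathcal{T}'$ with $L(a') = f(t_0)$; then $(a, a') \in H$ by the very definition of $H$. A telescoping calculation in the commutative groups yields
\[
N_F^{F^n}(t_0) \;=\; \prod_{i=0}^{n-1} F^i\bigl(F(a)^{-1} a\bigr) \;=\; a\, F^n(a)^{-1},
\]
and likewise $N_F^{F^n}(f(t_0)) = a'\, F^n(a')^{-1}$; in other words, the pair equals $h \cdot F^n(h)^{-1}$ with $h = (a, a')$. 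The hypothesis $F^n \circ f = f \circ F^n$, combined with the immediate identity $L \circ F^n = F^n \circ L$, shows that $F^n$ preserves $H$, so $h F^n(h)^{-1} \in H$; and because $t_0 \in \mathcal{T}^{F^n}$ (whence $f(t_0) \in \mathcal{T}'^{F^n}$), a direct check confirms $a F^n(a)^{-1} \in \mathcal{T}^F$ and $a' F^n(a')^{-1} \in \mathcal{T}'^F$.

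The main obstacle is to show that $h F^n(h)^{-1}$ actually lies in the identity component $H^0$. For this I would use the group homomorphism $H \to \mathcal{T}'$, $(t, t') \mapsto L(t')$, which is surjective (by Lang applied to both $\mathcal{T}$ and $\mathcal{T}'$) with kernel exactly $\mathcal{T}^F \times \mathcal{T}'^F$. Because $\mathcal{T}'$ is connected and $H^0$ has full dimension in $H$, the restriction $H^0 \to \mathcal{T}'$ is already surjective, and hence $H = H^0 \cdot (\mathcal{T}^F \times \mathcal{T}'^F)$; the finite component group $H/H^0$ is therefore a quotient of $\mathcal{T}^F \times \mathcal{T}'^F$. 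Since $F$ fixes $\mathcal{T}^F \times \mathcal{T}'^F$ pointwise, $F^n$ acts trivially on $H/H^0$, and consequently $h F^n(h)^{-1} \in H^0$ for every $h \in H$. Applied to $h = (a, a')$, this places $(N_F^{F^n}(t_0), N_F^{F^n}(f(t_0)))$ in $(\mathcal{T}^F \times \mathcal{T}'^F) \cap H^0$, and the hypothesis finishes the proof.
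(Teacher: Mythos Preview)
Your argument is correct. The paper does not supply its own proof of this lemma: it simply refers to Lusztig's original (\cite{lusztig-preprint-published}, 1.1), so there is no in-text argument to compare against. The line of reasoning you give---writing the norm as a Lang quotient $hF^n(h)^{-1}$ and then showing $F^n$ acts trivially on $H/H^0$ via the surjection $\mathcal{T}^F\times\mathcal{T}'^F\twoheadrightarrow H/H^0$---is essentially the standard Deligne--Lusztig argument behind Lusztig's Lemma~1.1.
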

\begin{proof}
See \cite{lusztig-preprint-published}, 1.1.
\end{proof}
From now on, let $\mathcal{T}=T^{r-1}$ and $\mathcal{T}'=T'^{r-1}$.
Note that in the case $r=1$ we have $\mathcal{T}=T$ and $\mathcal{T}'=T'$.

\begin{lem}
\label{lem:Lusztig's 1.3}Let $w\in W(T_{1},T_{1}')$, and let $\theta\in\widehat{T^{F}}$,
$\theta'\in\widehat{T'^{F}}$. Assume that $H_{c}^{j}(\Sigma_{w})_{\theta^{-1},\theta'}\neq0$
for some $j\in\mathbb{Z}$. Let $g=F(\hat{w})^{-1}$ and let $n\geq1$
be such that $g\in G^{F^{n}}$. Then $\mathrm{Ad}(g)$ (conjugation
by $g$) carries $\mathcal{T}^{F^{n}}$ onto $\mathcal{T}^{'F^{n}}$
and $\theta|_{\mathcal{T}^{F}}\circ N_{F}^{F^{n}}\in\widehat{\mathcal{T}^{F^{n}}}$
to $\theta'|_{\mathcal{T}'^{F}}\circ N_{F}^{F^{n}}\in\widehat{\mathcal{T}'^{F^{n}}}$
.
\end{lem}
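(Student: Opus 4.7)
The plan is to reduce the statement to Lemma~\ref{lem:Lusztig's 1.1} applied to the isomorphism $f\colon\mathcal{T}\iso\mathcal{T}'$ defined by $f(t)=gtg^{-1}$, where $g=F(\hat{w})^{-1}$. The fact that $f$ really maps $\mathcal{T}$ onto $\mathcal{T}'$ is direct: $\hat{w}\in N(T,T')$ gives $\hat{w}^{-1}T\hat{w}=T'$, and applying $F$ (which preserves $T$ and $T'$) yields $gTg^{-1}=F(\hat{w})^{-1}TF(\hat{w})=T'$; since $\mathcal{T}=T^{r-1}$ is a characteristic subgroup of $T$, this restricts to an isomorphism $\mathcal{T}\iso\mathcal{T}'$. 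The compatibility $F^{n}\circ f=f\circ F^{n}$ follows at once from $F^{n}(g)=g$, which is just the hypothesis $g\in G^{F^{n}}$.

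With this choice of $f$, Lemma~\ref{lem:Lusztig's 1.1} reduces the statement to proving that $\theta^{-1}\boxtimes\theta'$ is trivial on $(\mathcal{T}^{F}\times\mathcal{T}'^{F})\cap H^{0}$, with
\[
H=\bigl\{(\tau,\tau')\in\mathcal{T}\times\mathcal{T}'\bigm|f(F(\tau)^{-1}\tau)=F(\tau')^{-1}\tau'\bigr\}.
\]
I would establish this by exhibiting an algebraic action of $H^{0}$ on $\Sigma_{w}$ that restricts, on the finite subgroup $(\mathcal{T}^{F}\times\mathcal{T}'^{F})\cap H^{0}\subseteq T^{F}\times T'^{F}$, to the restriction of the fixed $T^{F}\times T'^{F}$-action. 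The natural candidate is
\[
(\tau,\tau')\cdot(x,x',y)=\bigl(\tau xF(\tau)^{-1},\;\tau' x'F(\tau')^{-1},\;\tau y(\tau')^{-1}\bigr),
\]
which a direct computation shows preserves the defining relation $xF(y)=yx'$ for every $(\tau,\tau')\in\mathcal{T}\times\mathcal{T}'$; it also preserves $y\in G_{w}$, because $\tau$ and $\tau'$ lie in $G^{r-1}\subseteq\ker\phi_{1}$ and hence do not disturb the Bruhat class of $y$ modulo $\mathfrak{m}$. The condition defining $H$, equivalently $F(\tau)^{-1}\tau\cdot F(\hat{w})=F(\hat{w})\cdot F(\tau')^{-1}\tau'$, is designed precisely to ensure that the twisted conjugates $\tau xF(\tau)^{-1}$ and $\tau' x'F(\tau')^{-1}$ can be rewritten as elements of $F(U)$ and $F(U')$ respectively, after re-arranging the Bruhat decomposition of $y$ supplied by Lemma~\ref{lem:Unique decomposition} and using the commutator relations and Iwahori decomposition of Lemma~\ref{lem:commutator-Iwahori} to absorb the Lang cocycles $\tau F(\tau)^{-1}$ and $\tau'F(\tau')^{-1}$.

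Granting the $H^{0}$-action, the standard fact that a connected algebraic group acts trivially on the $\ell$-adic cohomology of a variety forces $H^{0}$, and in particular its finite subgroup $(\mathcal{T}^{F}\times\mathcal{T}'^{F})\cap H^{0}$, to act trivially on each $H_{c}^{j}(\Sigma_{w})$. Since on this finite subgroup the action agrees with the restriction of the $T^{F}\times T'^{F}$-action, the assumption $H_{c}^{j}(\Sigma_{w})_{\theta^{-1},\theta'}\neq0$ then forces $\theta^{-1}\boxtimes\theta'$ to be trivial on $(\mathcal{T}^{F}\times\mathcal{T}'^{F})\cap H^{0}$. Lemma~\ref{lem:Lusztig's 1.1} applied to $f$ now gives the identity $\theta\circ N_{F}^{F^{n}}=\theta'\circ N_{F}^{F^{n}}\circ f$ on $\mathcal{T}^{F^{n}}$, which is exactly the desired conclusion.

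The principal obstacle is the verification that the candidate action really preserves $\Sigma_{w}$, that is, that the twisted $x$- and $x'$-coordinates remain in $F(U)$ and $F(U')$. In the characteristic-$p$ setting of \cite{lusztig-preprint-published} this can be done by direct computations in root-subgroup coordinates, but in our general Artinian local setting one must work more invariantly with the Bruhat decomposition of Lemma~\ref{lem:Unique decomposition}, the commutator filtration $[G^{i},G^{j}]\subseteq G^{i+j}$, and the Iwahori decomposition $G^{1}=(U^{-})^{1}T^{1}U^{1}$ of Lemma~\ref{lem:commutator-Iwahori}, in order to match the error terms against the defining equation of $H$.
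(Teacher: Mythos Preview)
Your overall plan matches the paper's: reduce to Lemma~\ref{lem:Lusztig's 1.1} with $f=\mathrm{Ad}(F(\hat w)^{-1})$ by exhibiting an $H$-action on a model of $\Sigma_w$ extending the $\mathcal T^{F}\times\mathcal T'^{F}$-action, then use triviality of $H^0$ on cohomology. The gap is in the action itself. Your candidate $(\tau,\tau')\cdot(x,x',y)=(\tau xF(\tau)^{-1},\tau'x'F(\tau')^{-1},\tau y\tau'^{-1})$ does preserve the relation $xF(y)=yx'$ --- indeed for \emph{every} $(\tau,\tau')\in\mathcal T\times\mathcal T'$, with no $H$-condition required --- but it does \emph{not} preserve $F(U)$: writing $\tau xF(\tau)^{-1}=(\tau x\tau^{-1})(\tau F(\tau)^{-1})$, the first factor lies in $F(U)$ while the second lies in $T$, and $T\cap F(U)=\{1\}$ (take $x=1$: then $\tau F(\tau)^{-1}\in F(U)$ would force $\tau\in T^{F}$). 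So your map is not an endomorphism of $\Sigma_w$, and the $H$-condition does not repair this; your account of its role is therefore off.

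The paper's remedy is to change the model rather than the formula. Via Lemma~\ref{lem:Unique decomposition} one passes to the isomorphic variety $\tilde\Sigma_{\hat w}$ with coordinates $(x,x',u,u',k,\nu)$ (where $\nu\in\hat wT'$ carries the torus part of $y$), and then substitutes $xF(u)\mapsto x$, $x'F(u')^{-1}\mapsto x'$. On the resulting variety the $H$-action is simply \emph{conjugation} on each coordinate, $x\mapsto txt^{-1}$, $\nu\mapsto t\nu t'^{-1}$, etc., which visibly preserves $F(U)$, $F(U')$ and the other factors. The $H$-condition now enters where it should: together with the fact (Lemma~\ref{lem:commutator-Iwahori}(a)) that $\mathcal T,\mathcal T'\subseteq G^{r-1}$ centralise $G^{1}\ni k,F(k)$, it is exactly what is needed to check that the new defining relation $xF(k)F(\nu)=u\nu ku'x'$ is preserved, the Lang cocycle $tF(t)^{-1}$ being absorbed through the $\nu$-coordinate rather than the $x$-coordinate. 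This passage to the auxiliary variety is the step your proposal gestures at but does not carry out.
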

\begin{proof}
Put $U_{\hat{w}}=U\cap\hat{w}U'^{-}\hat{w}^{-1}$ and $K=(U'^{-})^{1}\cap\hat{w}^{-1}U^{-}\hat{w}$.
$ $By Lemma \ref{lem:Unique decomposition}, we then have an isomorphism\[
\begin{split}\tilde{\Sigma}_{\hat{w}} & :=\{(x,x',u,u',k,\nu)\in F(U)\times F(U')\times U_{\hat{w}}\times U'\times K\times\hat{w}T'\mid\\
 & \hspace{.55cm}xF(u)F(\nu)F(k)F(u')=u\nu ku'x'\}\longiso\Sigma_{w},\end{split}
\]
given by $(x,x',u,u',k,\nu)\mapsto(x,x',u\nu ku')$. This isomorphism
is compatible with the $T^{F}\times T'^{F}$-actions, where $T^{F}\times T'^{F}$
acts on $\tilde{\Sigma}_{\hat{w}}$ by\renewcommand{\theequation}{\alph{equation}}\begin{equation}
(t,t'):(x,x',u,u',k,\nu)\longmapsto(txt^{-1},t'x't'^{-1},tut^{-1},t'u't'^{-1},t'kt'^{-1},t\nu t'^{-1}).\label{eq:(a)}\end{equation}
Hence we have $H_{c}^{j}(\tilde{\Sigma}_{\hat{w}})_{\theta^{-1},\theta'}\neq0$.
By the substitution $xF(u)\mapsto x$, $x'F(u')^{-1}\mapsto x'$,
the variety $\tilde{\Sigma}_{\hat{w}}$ is rewritten as\begin{equation}
\begin{split} & \{(x,x',u,u',k,\nu)\in F(U)\times F(U')\times U_{\hat{w}}\times U'\times K\times\hat{w}T'\mid\\
 & xF(k)F(\nu)=u\nu ku'x'\};\end{split}
\label{eq:(b)}\end{equation}
in these coordinates, the action of $T^{F}\times T'^{F}$ is still
given by \eqref{eq:(a)}. Let \[
H=\{(t,t')\in\mathcal{T}\times\mathcal{T}'\mid t'F(t')^{-1}=F(\hat{w})^{-1}tF(t)^{-1}F(\hat{w})\}\]
(a closed subgroup of $T\times T'$). It acts on the variety \eqref{eq:(b)}
by the same formula as in \eqref{eq:(a)} (we use Lemma \ref{lem:commutator-Iwahori}
to show that $\mathcal{T}$ and $\mathcal{T}'$ centralize $G^{1}$).
By \cite{delignelusztig}, 6.5, the induced action of $H$ on $H_{c}^{j}(\tilde{\Sigma}_{\hat{w}})$
is trivial when restricted to the connected component $H^{0}$. In
particular, the intersection $(T^{F}\times T'^{F})\cap H^{0}$ acts
trivially on $H_{c}^{j}(\tilde{\Sigma}_{\hat{w}})$. Since $H_{c}^{j}(\tilde{\Sigma}_{\hat{w}})_{\theta^{-1},\theta^{'}}\neq0$,
it follows that $\theta^{-1}\boxtimes\theta'$ is trivial on $(T^{F}\times T'^{F})\cap H^{0}$.
Let $g=F(\hat{w})^{-1}$ and let $n\geq1$ be such that $g\in G^{F^{n}}$.
Then $\mathrm{Ad}(g)$ carries $\mathcal{T}^{F^{n}}$ onto $\mathcal{T}'^{F^{n}}$
and (by Lemma \ref{lem:Lusztig's 1.1} with $f=\mathrm{Ad}(g)$) it
carries $\theta|_{\mathcal{T}^{F}}\circ N_{F}^{F^{n}}$ to $ $$\theta'|_{\mathcal{T}'^{F}}\circ N_{F}^{F^{n}}$. 
\end{proof}
With the above lemma proved for each $\Sigma_{w}$ we can deduce a
similar statement for the whole variety $\Sigma$. This will be used
later (in Prop.~\ref{pro:Geom conj}) to prove a result which is
a generalization of Theorem~6.2 of Deligne and Lusztig \cite{delignelusztig}.

\begin{lem}
\label{lem:Lusztig's 1.4}Let $\theta\in\widehat{T^{F}}$, $\theta'\in\widehat{T'^{F}}$
be such that\setcounter{equation}{0}\renewcommand{\theequation}{\alph{equation}}
\begin{equation}
H_{c}^{j}(\Sigma)_{\theta^{-1},\theta'}\neq0\label{eq:Lusztig's 1.4}\end{equation}
for some $j\in\mathbb{Z}$. Then there exists $n\geq1$ and $g\in N(T',T)^{F^{n}}$
such that $\mathrm{Ad}(g)$ carries $\theta|_{\mathcal{T}^{F}}\circ N_{F}^{F^{n}}\in\widehat{\mathcal{T}^{F^{n}}}$
to $\theta'|_{\mathcal{T}'^{F}}\circ N_{F}^{F^{n}}\in\widehat{\mathcal{T}'^{F^{n}}}$
.
\end{lem}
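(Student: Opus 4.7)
The plan is to reduce to Lemma~\ref{lem:Lusztig's 1.3} via the stratification $\Sigma=\bigsqcup_{w\in W(T_1,T_1')}\Sigma_w$. First I observe that each $G_w=\phi_1^{-1}(G_{1,w})$ is locally closed in $G$, since the Bruhat cell $G_{1,w}$ is locally closed in $G_1$; hence each $\Sigma_w$ is locally closed in $\Sigma$, and (as noted in the excerpt) is stable under the $T^F\times T'^F$-action. Order the finite set $W(T_1,T_1')$ compatibly with the Bruhat order so that each partial union $Z_i=\bigsqcup_{j\leq i}\Sigma_{w_j}$ is closed in $\Sigma$, with open complement $U_i=\bigsqcup_{j>i}\Sigma_{w_j}$.

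Next I run induction on $i$ using the open--closed long exact sequence in compactly supported cohomology
\[
\cdots\to H_c^{j-1}(Z_{i})\to H_c^j(U_i)\to H_c^j(\Sigma)\to H_c^j(Z_i)\to\cdots,
\]
and analogous sequences inside each $U_i$. Each map is $T^F\times T'^F$-equivariant, and since $T^F\times T'^F$ is a finite group and the coefficients are $\overline{\mathbb{Q}}_l$, passing to the $(\theta^{-1},\theta')$-isotypic component is exact. The hypothesis \eqref{eq:Lusztig's 1.4} therefore forces $H_c^{j'}(\Sigma_w)_{\theta^{-1},\theta'}\neq0$ for some $w\in W(T_1,T_1')$ and some $j'\in\mathbb{Z}$.

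For this $w$ I set $g=F(\hat{w})^{-1}$ and choose $n\geq1$ with $F^n(g)=g$; such $n$ exists because $\hat w$ comes from a point of $\mathbf{G}(A)$ defined over a finite extension of $\mathbb{F}_q$. Lemma~\ref{lem:Lusztig's 1.3} then yields that $\mathrm{Ad}(g)$ carries $\mathcal{T}^{F^n}$ onto $\mathcal{T}'^{F^n}$ and sends $\theta|_{\mathcal{T}^F}\circ N_F^{F^n}$ to $\theta'|_{\mathcal{T}'^F}\circ N_F^{F^n}$. It remains to check that $g\in N(T',T)^{F^n}$. From $\hat{w}\in N(T,T')$ I have $\hat{w}^{-1}T\hat{w}=T'$; applying $F$ and using the $F$-stability of $T$ and $T'$ gives $F(\hat{w})^{-1}TF(\hat{w})=T'$, hence $g=F(\hat{w})^{-1}$ lies in $N(T',T)$. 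Together with $g\in G^{F^n}$ and the $F$-stability of $N(T',T)$ this gives $g\in N(T',T)^{F^n}$, as required.

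The only step requiring real care is the first paragraph: one must know that the Bruhat stratification of $G_1$ lifts to a $T^F\times T'^F$-stable, locally closed stratification of $\Sigma$, and one must confirm that the $(\theta^{-1},\theta')$-isotypic component is preserved by the long exact sequences so that the induction goes through. Once a stratum $\Sigma_w$ with nonvanishing isotypic cohomology is produced, the conclusion is a direct translation of Lemma~\ref{lem:Lusztig's 1.3}, together with the elementary verification that $F(\hat w)^{-1}$ conjugates $T'$ to $T$.
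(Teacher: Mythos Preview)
Your proof is correct and follows essentially the same approach as the paper: the paper invokes the spectral sequence associated to the filtration of $\Sigma$ by the closed unions $\bigcup_{w'\leq w}\Sigma_{w'}$ to produce a stratum with nonvanishing $(\theta^{-1},\theta')$-isotypic cohomology, then applies Lemma~\ref{lem:Lusztig's 1.3}; your iterated use of the open--closed long exact sequence is just an unpacking of that spectral sequence argument. Your explicit verification that $g=F(\hat w)^{-1}\in N(T',T)^{F^n}$ is a detail the paper leaves implicit.
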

\begin{proof}
It is well-known that the subvarieties $G_{1,w}$ of $G_{1}$ have
the following property: for some ordering $\leq$ of $W(T_{1},T_{1}'),$
the unions $\bigcup_{w'\leq w}G_{1,w'}$ are closed in $G_{1}$. It
follows that the unions $\bigcup_{w'\leq w}G_{w'}$ are closed in
$G$, and the unions $\bigcup_{w'\leq w}\Sigma_{w'}$ are closed in
$\Sigma$. The spectral sequence associated to the filtration of $\Sigma$
by these unions, together with \eqref{eq:Lusztig's 1.4}, shows that
there exists $w\in W(T_{1},T_{1}')$ and $j'\in\mathbb{Z}$ such that
$H_{c}^{j'}(\Sigma_{w})_{\theta^{-1},\theta'}\neq0$. We can therefore
apply Lemma \ref{lem:Lusztig's 1.3} to get an element $g=F(\hat{w})^{-1}\in N(T',T)^{F^{n}}$,
for some $n\geq1$, satisfying the conclusion.
\end{proof}
For each root $\alpha\in\Phi(\mathbf{G},\mathbf{T})$ we have a unique
coroot $\check{\alpha}\in\Hom_{A\text{-gr}}((\mathbb{G}_{m})_{A},\mathbf{T})$.
Let $\mathbf{T}^{\alpha}$ denote the image of $\check{\alpha}$ in
$\mathbf{T}$, so that $\mathbf{T}^{\alpha}$ is a $1$-dimensional
torus in $\mathbf{T}$ (cf.~\cite{SGA3}, XX 3). Keeping with our
notational conventions, we let $U_{\alpha}=(\mathcal{F}_{A}\mathbf{U}_{\alpha})(k)$
and $T^{\alpha}=(\mathcal{F}_{A}\mathbf{T}^{\alpha})(k)$. We also
write $\mathcal{T}^{\alpha}=(T^{\alpha})^{r-1}$ (a $1$-dimensional
subgroup of $\mathcal{T}=T^{r-1}$, cf.~\cite{greenberg2}, 3).

\begin{defn}
\label{def:regular}Let $\chi\in\widehat{\mathcal{T}^{F}}$. We say
that $\chi$ is \emph{regular} if for any $\alpha\in\Phi$ and any
$n\geq1$ such that $F^{n}(\mathcal{T}^{\alpha})=\mathcal{T}^{\alpha}$,
the restriction of $\chi\circ N_{F}^{F^{n}}:\mathcal{T}^{F^{n}}\rightarrow\overline{\mathbb{Q}}_{l}\vphantom{\overline{\mathbb{Q}}}^{\hspace{-3pt}\times}$
to $(\mathcal{T}^{\alpha})^{F^{n}}$ is non-trivial. If $\theta\in\widehat{T^{F}}$,
we say that $\theta$ is \emph{regular} if $\theta|_{\mathcal{T}^{F}}$
is regular.
\end{defn}
\begin{lem}
\label{lem:regularity test}Let $\chi\in\widehat{\mathcal{T}^{F}}$,
and suppose that there exists an $n\geq1$ such that for all $\alpha\in\Phi$,
$F^{n}(\mathcal{T}^{\alpha})=\mathcal{T}^{\alpha}$ and the restriction
of $\chi\circ N_{F}^{F^{n}}$ to $(\mathcal{T}^{\alpha})^{F^{n}}$
is non-trivial. Then $\chi$ is regular.
\end{lem}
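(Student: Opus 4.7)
The plan is to reduce the regularity condition for an arbitrary integer $m$ (with $F^m(\mathcal{T}^\alpha) = \mathcal{T}^\alpha$) to the single given integer $n$, by passing to a common multiple and exploiting transitivity of the norm map together with Lang--Steinberg.

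Fix a root $\alpha \in \Phi$ and let $m \geq 1$ be an integer with $F^m(\mathcal{T}^\alpha) = \mathcal{T}^\alpha$; we must show that the restriction of $\chi \circ N_F^{F^m}$ to $(\mathcal{T}^\alpha)^{F^m}$ is non-trivial. Set $N = nm$ (any common multiple would do). Then $F^N$ stabilises $\mathcal{T}^\alpha$, and on the connected one-dimensional torus $\mathcal{T}^\alpha$ the norm maps behave transitively: restricting both sides of
\[
N_F^{F^N} \;=\; N_F^{F^n} \circ N_{F^n}^{F^N} \;=\; N_F^{F^m} \circ N_{F^m}^{F^N}
\]
to the finite group $(\mathcal{T}^\alpha)^{F^N}$ gives two factorisations of the same character $\chi \circ N_F^{F^N}|_{(\mathcal{T}^\alpha)^{F^N}}$.

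Next, I invoke Lang--Steinberg on the connected group $\mathcal{T}^\alpha$: the norm map $N_{F^n}^{F^N}\colon (\mathcal{T}^\alpha)^{F^N} \to (\mathcal{T}^\alpha)^{F^n}$ is surjective. Since by hypothesis $\chi \circ N_F^{F^n}$ is non-trivial on $(\mathcal{T}^\alpha)^{F^n}$, the composition $\chi \circ N_F^{F^n} \circ N_{F^n}^{F^N} = \chi \circ N_F^{F^N}$ is non-trivial on $(\mathcal{T}^\alpha)^{F^N}$. Reading this through the other factorisation, if $\chi \circ N_F^{F^m}$ were trivial on $(\mathcal{T}^\alpha)^{F^m}$, then $\chi \circ N_F^{F^m} \circ N_{F^m}^{F^N}$ would be trivial on $(\mathcal{T}^\alpha)^{F^N}$, contradicting what we have just shown. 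Hence $\chi \circ N_F^{F^m}$ is non-trivial on $(\mathcal{T}^\alpha)^{F^m}$, and $\chi$ is regular.

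The only substantive point is the surjectivity of $N_{F^n}^{F^N}$ on $F^N$-fixed points of $\mathcal{T}^\alpha$, for which I need $\mathcal{T}^\alpha$ to be a \emph{connected} commutative group admitting $F^n$ as an endomorphism with finite fixed points; connectedness of $\mathcal{T}^\alpha = (T^\alpha)^{r-1}$ follows from the fact that $\mathbf{T}^\alpha$ is a smooth connected one-dimensional torus over $A$, so its image under the Greenberg functor is connected, as are the congruence kernels $(T^\alpha)^i$. Granted this, the classical Lang--Steinberg argument applies and the rest is bookkeeping with norm maps; this is the only real obstacle, and it is standard in the present framework.
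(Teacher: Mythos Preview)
Your proof is correct and rests on the same two ingredients as the paper's: transitivity of norm maps $N_F^{F^N}=N_F^{F^n}\circ N_{F^n}^{F^N}$ and surjectivity of $N_{F^n}^{F^N}$ on a connected $F^n$-stable subgroup via Lang--Steinberg. The organisation differs slightly: the paper first passes to the \emph{minimal} positive integer $m$ with $F^m(\mathcal{T}^\alpha)=\mathcal{T}^\alpha$ for all $\alpha$, deduces $m\mid n$ and $m\mid m'$ for any other admissible $m'$, and then uses transitivity and surjectivity along these divisibilities; you instead fix a single pair $(\alpha,m)$, pass to the common multiple $N=nm$, and read off the conclusion from the two factorisations of $N_F^{F^N}$. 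Your route is a touch more direct and hews more closely to the definition of regularity (which is stated one root at a time), while the paper's detour through the minimal period makes the structure of the set of admissible exponents explicit. A minor wording point: $\mathcal{T}^\alpha=(T^\alpha)^{r-1}$ is not literally a torus for $r\ge 2$ but a connected one-dimensional commutative unipotent group; your later phrasing ``connected commutative group'' is the accurate one, and is exactly what Lang--Steinberg requires.
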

\begin{proof}
We first show some properties of the norm map. Let $\mathcal{T}$
be a commutative algebraic group defined over $\mathbb{F}_{q}$ with
Frobenius $F$. Let $a$ and $b$ be two positive integers such that
$b=ka$, for some integer $k$. Then clearly $\mathcal{T}^{F^{a}}\subseteq\mathcal{T}^{F^{b}}$.
We extend the definition of the norm map by defining the map $N_{F^{a}}^{F^{b}}:\mathcal{T}^{F^{b}}\rightarrow\mathcal{T}^{F^{a}}$,
$x\mapsto xF^{a}(x)F^{2a}(x)\cdots F^{(k-1)a}(x)$. We then have \[
N_{F}^{F^{a}}N_{F^{a}}^{F^{b}}(x)=\prod_{j=0}^{a-1}F^{j}\left(\prod_{i=0}^{k-1}F^{ia}(x)\right)=\prod_{j=0}^{a-1}\prod_{i=0}^{k-1}F^{j+ia}(x)=\prod_{i=0}^{b-1}F^{i}(x)=N_{F}^{F^{b}}(x),\]
so $N_{F}^{F^{b}}=N_{F}^{F^{a}}\circ N_{F^{a}}^{F^{b}}$. Now suppose
that $\mathcal{H}$ is a closed connected subgroup of $\mathcal{T}$
which is stable under $F^{a}$ and $F^{b}$. The map $N_{F^{a}}^{F^{b}}$
restricts to a map $N_{F^{a}}^{F^{b}}:\mathcal{H}^{F^{b}}\rightarrow\mathcal{H}^{F^{a}}$,
which we claim is surjective. Indeed, if $x\in\mathcal{H}^{F^{a}}$,
then by the Lang-Steinberg theorem there exists some $y\in\mathcal{H}$
such that $y^{-1}F^{b}(y)=x$. Now, $F^{a}(x)=x$ implies that $F^{a}(y^{-1}F^{b}(y))=y^{-1}F^{b}(y)$,
and so $F^{b}(y^{-1}F^{a}(y))=y^{-1}F^{a}(y)$. Thus $y^{-1}F^{a}(y)\in\mathcal{H}^{F^{b}}$,
and $N_{F^{a}}^{F^{b}}(y^{-1}F^{a}(y))=y^{-1}F^{b}(y)=x$. 

Now let $m$ be the minimal positive integer such that $F^{m}(\mathcal{T}^{\alpha})=\mathcal{T}^{\alpha}$,
for all $\alpha\in\Phi$. Write $n=gm+h$ with integers $g\geq1$
and $0\leq h<m$. Then $F^{n}(\mathcal{T}^{\alpha})=\mathcal{T}^{\alpha}\,\forall\alpha$
implies that $F^{h}(\mathcal{T}^{\alpha})=\mathcal{T}^{\alpha}\,\forall\alpha$
, so the minimality of $m$ forces $h=0$. If for some $\alpha$ we
have $\chi\circ N_{F}^{F^{m}}((\mathcal{T}^{\alpha})^{F^{m}})=1$,
then $N_{F}^{F^{n}}=N_{F}^{F^{m}}\circ N_{F^{m}}^{F^{n}}$ implies
$N_{F}^{F^{n}}((\mathcal{T}^{\alpha})^{F^{n}})\subseteq N_{F}^{F^{m}}((\mathcal{T}^{\alpha})^{F^{m}})$,
so $\chi\circ N_{F}^{F^{n}}((\mathcal{T}^{\alpha})^{F^{n}})=1$, which
contradicts the hypothesis. Thus $m$ is such that the restriction
of $\chi\circ N_{F}^{F^{m}}$ to $(\mathcal{T}^{\alpha})^{F^{m}}$
is non-trivial, for all $\alpha$.

Finally, suppose that $m'$ is an arbitrary positive integer such
that $ $$F^{m'}(\mathcal{T}^{\alpha})=\mathcal{T}^{\alpha}$, for
all $\alpha$. Then as we have seen, $m\mid m'$. By the surjectivity
and transitivity of the norm map we get $N_{F}^{F^{m'}}((\mathcal{T}^{\alpha})^{F^{m'}})=N_{F}^{F^{m}}\circ N_{F^{m}}^{F^{m'}}((\mathcal{T}^{\alpha})^{F^{m'}})=N_{F}^{F^{m}}((\mathcal{T}^{\alpha})^{F^{m}})$.
Thus $\chi\circ N_{F}^{F^{m'}}((\mathcal{T}^{\alpha})^{F^{m'}})=\chi\circ N_{F}^{F^{m}}((\mathcal{T}^{\alpha})^{F^{m}})\neq1$
for all $\alpha$, and so $\chi$ is regular. 
\end{proof}
As before, $\mathbf{U}$ is the unipotent radical of a Borel subgroup
of $\mathbf{G}$ containing $\mathbf{T}$. Let $\mathbf{V}$ be the
unipotent radical of another such Borel subgroup, and let $ $$\mathbf{U}^{-}$
(resp. $\mathbf{V}^{-}$) be the unipotent radical of a Borel subgroup
of $\mathbf{G}$ containing $\mathbf{T}$ such that $\mathbf{U}\cap\mathbf{U}^{-}=\{1\}$
(resp. $\mathbf{V}\cap\mathbf{V}^{-}=\{1\}$). The corresponding subgroups
of $G$ are denoted by $U,U^{-},V,V^{-}$, respectively. Let \[
\Phi^{+}=\{\alpha\in\Phi\mid\mathbf{U}_{\alpha}\subseteq\mathbf{V}\},\qquad\Phi^{-}=\{\alpha\in\Phi\mid\mathbf{U}_{\alpha}\subseteq\mathbf{V}^{-}\},\]
be the positive, respectively negative, roots corresponding to the
choice of $\mathbf{V}$ and $\mathbf{V}^{-}$. Then $\Phi=\Phi^{+}\sqcup\Phi^{-}$
and $\Phi^{-}=\{-\alpha\mid\alpha\in\Phi^{+}\}$. For $\alpha\in\Phi^{+}$
let $\height(\alpha)$ be the largest integer $n\geq1$ such that
$\alpha=\alpha_{1}+\alpha_{2}+\dots+\alpha_{n}$ with $\alpha_{i}\in\Phi^{+}$.
In the following, for two elements $x,y$ of a group, we shall write
$[x,y]=xyx^{-1}y^{-1}$ for their commutator. 

\begin{lem}
\label{lem:roots-commutator}Let $x\in(U_{\alpha})^{b}$, and $x'\in(U_{\alpha'})^{c}$,
where $\alpha,\alpha'\in\Phi$ and $0\leq b,c\leq r$. Then the following
holds:
\begin{enumerate}
\item If $b+c\geq r$ then $xx'=x'x$.
\item If $b+c\leq r$ and $\alpha\neq-\alpha'$, then \[
[x,x']=\prod_{\substack{i,i'\geq1\\
i\alpha+i'\alpha'\in\Phi}
}u_{i,i'},\]
where $u_{i,i'}\in(U_{i\alpha+i'\alpha'})^{b+c}$. (The factors in
the product are written in a fixed but arbitrary order.)
\item If $b+c\geq r-1$, $b+2c\geq r$, and $\alpha=-\alpha'$, then $[x,x']=\tau_{x,x'}u$,
where $\tau_{x,x'}\in\mathcal{T}^{\alpha}$ and $u\in(U_{\alpha})^{r-1}$
are uniquely determined. 
\end{enumerate}
\end{lem}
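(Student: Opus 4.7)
Part (a) will follow at once from Lemma~\ref{lem:commutator-Iwahori}(a): since $(U_\alpha)^b \subseteq G^b$ and $(U_{\alpha'})^c \subseteq G^c$, one has $[x,x'] \in G^{b+c} \subseteq G^r = \{1\}$.

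For (b), my plan is to invoke the scheme-theoretic Chevalley commutator formula (SGA~3, XXII.5.5.2), valid over any base for a split reductive group scheme and applicable here by Lemma~\ref{lem:splitting}. Using the canonical isomorphisms $\exp_\alpha \colon \mathbb{G}_{a,A} \iso \mathbf{U}_\alpha$ coming from a pinning, write $x = \exp_\alpha(s)$ and $x' = \exp_{\alpha'}(t)$ with $s \in \mathfrak{m}^b$, $t \in \mathfrak{m}^c$. The formula then expresses $[x,x']$ as an ordered product of terms $\exp_{i\alpha + i'\alpha'}(c_{i,i'}\, s^{i} t^{i'})$ over pairs $i, i' \geq 1$ with $i\alpha + i'\alpha' \in \Phi$, the $c_{i,i'}$ being universal integer structure constants. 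Since $s^i t^{i'} \in \mathfrak{m}^{ib + i'c} \subseteq \mathfrak{m}^{b+c}$ for all such $i, i'$, each factor lies in $(U_{i\alpha + i'\alpha'})^{b+c}$, as asserted.

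Part (c) is the substantive point. My plan is to reduce the computation to an $\mathbf{SL}_{2,A}$-calculation using the morphism $\phi_\alpha \colon \mathbf{SL}_{2,A} \to \mathbf{G}$ associated to the root $\alpha$ (SGA~3, XX.5), which sends upper and lower unipotents onto $\mathbf{U}_{\pm\alpha}$ via $\exp_{\pm\alpha}$ and the diagonal torus onto $\mathbf{T}^\alpha$ via the coroot $\check\alpha$. Lifting $x = \exp_\alpha(s)$ and $x' = \exp_{-\alpha}(t)$ to $\tilde x = \bigl(\begin{smallmatrix}1 & s \\ 0 & 1\end{smallmatrix}\bigr)$ and $\tilde x' = \bigl(\begin{smallmatrix}1 & 0 \\ t & 1\end{smallmatrix}\bigr)$ in $\mathbf{SL}_2(A)$, a direct matrix computation gives $[\tilde x, \tilde x'] = \bigl(\begin{smallmatrix}1 + st + s^2 t^2 & -s^2 t \\ s t^2 & 1 - st\end{smallmatrix}\bigr)$. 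The hypothesis $b + 2c \geq r$ kills $st^2$, while $b + c \geq r - 1$ forces $s^2 t^2 \in \mathfrak{m}^{2(r-1)} \subseteq \mathfrak{m}^r$ when $r \geq 2$ (the case $r = 1$ being immediate since one of $x, x'$ is then trivial). The commutator thus becomes upper-triangular in $\mathbf{SL}_2(A)$ and factors uniquely as $\mathrm{diag}(1+st, 1-st) \cdot \bigl(\begin{smallmatrix}1 & -s^2 t \\ 0 & 1\end{smallmatrix}\bigr)$, where the cancellation $(1+st)^{-1}(-s^2 t) = -s^2 t$ uses $s^3 t^2 = s^2 \cdot st^2 = 0$. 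Pushing forward under $\phi_\alpha$ yields $[x, x'] = \check\alpha(1+st) \cdot \exp_\alpha(-s^2 t)$; setting $\tau = \check\alpha(1+st)$ and $u = \exp_\alpha(-s^2 t)$, we have $\tau \in (T^\alpha)^{r-1} = \mathcal{T}^\alpha$ since $st \in \mathfrak{m}^{r-1}$, and $u \in (U_\alpha)^{r-1}$ since $s^2 t \in \mathfrak{m}^{2b + c}$ with $2b + c \geq b + (b+c) \geq r - 1$. Uniqueness of $(\tau, u)$ is immediate from $T^\alpha \cap U_\alpha = \{1\}$.

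The main obstacle is (c), specifically justifying the reduction to $\mathbf{SL}_{2,A}$ in our generality. In the characteristic-$p$ setting of \cite{lusztig-preprint-published} one can realize $\mathrm{SL}_2$ directly as an algebraic subgroup of $G_1$ and lift the calculation into $G$, but over our possibly mixed-characteristic Artinian base this shortcut is unavailable: in general there is no subring-embedding $k \hookrightarrow A$. I expect the delicate step to be the careful invocation of $\phi_\alpha$ from SGA~3 together with the verification that it transports the $(U_{\pm\alpha})^b$ and $(T^\alpha)^i$ filtrations to the $\mathfrak{m}$-adic filtrations on the unipotent and toric parts of $\mathbf{SL}_2(A)$, so that the vanishings $st^2 = 0$ and $s^2 t^2 = 0$ translate into the desired membership statements inside $\mathbf{G}(A)$.
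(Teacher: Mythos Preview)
Your treatments of (a) and (b) match the paper's exactly: (a) via Lemma~\ref{lem:commutator-Iwahori}(a), and (b) via the Chevalley commutator formula together with the observation that $p_\alpha$ carries $\mathfrak{m}^j$ to $(U_\alpha)^j$.

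For (c) you take a genuinely different and shorter route than the paper. The paper works entirely inside $\mathbf{G}(A)$: it invokes the relation
\[
p_\alpha(\tilde x)\,p_{-\alpha}(\tilde x') \;=\; p_{-\alpha}\!\Bigl(\tfrac{\tilde x'}{1+a\tilde x\tilde x'}\Bigr)\,\check\alpha(1+a\tilde x\tilde x')\,p_\alpha\!\Bigl(\tfrac{\tilde x}{1+a\tilde x\tilde x'}\Bigr)
\]
from SGA~3, XX~2.2 (with an unspecified unit $a$ reflecting the independent choices of $p_{\pm\alpha}$), applies it twice to the four-term product $p_\alpha(\tilde x)p_{-\alpha}(\tilde x')p_\alpha(-\tilde x)p_{-\alpha}(-\tilde x')$, and then carries out a page-long chain of conjugations $t\,p_\beta(z)\,t^{-1}=p_\beta(\beta(t)z)$ and simplifications using $\tilde x\tilde x'^{2}=0$ before arriving at an expression $\check\alpha(1+a\tilde x\tilde x')\cdot p_\alpha(\cdots)$. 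You instead lift to $\mathbf{SL}_{2}(A)$ via the group-scheme morphism $\phi_\alpha\colon\mathbf{SL}_{2,A}\to\mathbf{G}$, compute a single $2\times2$ matrix commutator, and push forward. Notably, the paper's introduction remarks that in characteristic $p$ the proof ``can be reduced to the case of $\mathrm{SL}_2$'' whereas in the general Artinian setting extra complications appear; your argument shows that the $\mathrm{SL}_2$ reduction \emph{does} survive, provided one uses $\phi_\alpha$ as a morphism of schemes over $A$ rather than attempting to embed $\mathrm{SL}_2$ into the algebraic group $G$ over $k$ (which indeed fails without a section $k\hookrightarrow A$). What your approach buys is brevity and transparency; what the paper's approach buys is that it stays within the single formula XX~2.2 already in play, avoiding a separate citation for the existence and compatibility of $\phi_\alpha$. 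Two minor points worth making explicit when you write this up: the parametrisations $\exp_{\pm\alpha}$ coming from $\phi_\alpha$ may differ from arbitrary $p_{\pm\alpha}$ by units of $A$, which is harmless since the filtrations $(U_{\pm\alpha})^i$ are intrinsic; and your bound $2b+c\ge r-1$ for $-s^2t\in\mathfrak{m}^{r-1}$ uses only $b\ge0$ and $b+c\ge r-1$, so the hypothesis $b+2c\ge r$ is used solely to kill the lower-left entry $st^2$.
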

\begin{proof}
Part (a) follows immediately from Lemma \ref{lem:commutator-Iwahori}(a).
Part (b) is Chevalley's commutator formula (cf.~\cite{Demazure},
3.3.4) applied to the various subgroups $\mathbf{U}_{i\alpha+i'\alpha'}(A)$
of $\mathbf{G}(A)$. For each $\alpha,$ choose corresponding isomorphisms
$p_{\alpha}:(\mathbb{G}_{a})_{A}\rightarrow\mathbf{U}_{\alpha}$ as
in \cite{SGA3}, XX 1.20. Functorial properties then imply that $p_{\alpha}(\Ker(\mathbb{G}_{a}(A)\rightarrow\mathbb{G}_{a}(A/\mathfrak{m}^{j})))=\Ker(\mathbf{U}_{\alpha}(A)\rightarrow\mathbf{U}_{\alpha}(A/\mathfrak{m}^{j})\cong(U_{\alpha})^{j}$,
for any $0\leq j\leq r$ (note that we abuse notation since $p_{\alpha}$
is really a map of group functors rather than groups), and the formula
follows. Finally, we prove (c). Let $\tilde{x},\tilde{x}'\in\mathbb{G}_{a}(A)$
be such that $p_{\alpha}(\tilde{x})=x$ and $p_{-\alpha}(\tilde{x'})=x'$.
Then $\tilde{x}\in\Ker(\mathbb{G}_{a}(A)\rightarrow\mathbb{G}_{a}(A/\mathfrak{m}^{b}))$,
and $\tilde{x}'\in\Ker(\mathbb{G}_{a}(A)\rightarrow\mathbb{G}_{a}(A/\mathfrak{m}^{c}))$,
respectively; thus $1+a\tilde{x}\tilde{x}'\in\mathbb{G}_{m}(A)$,
for any $a\in\mathbb{G}_{m}(A)$. The hypotheses $b+c\geq r-1$ and
$b+2c\geq r$ imply that $\tilde{x}\tilde{x}'\in\Ker(\mathbb{G}_{a}(A)\rightarrow\mathbb{G}_{a}(A/\mathfrak{m}^{r-1}))$
and $\tilde{x}\tilde{x}'^{2}=0$. By \cite{SGA3}, XX 2.2 we have,
for some $a\in\mathbb{G}_{m}(A)$:\[
p_{\alpha}(\tilde{x})p_{-\alpha}(\tilde{x}')=p_{-\alpha}(\frac{\tilde{x}'}{1+a\tilde{x}\tilde{x}'})\check{\alpha}(1+a\tilde{x}\tilde{x}')p_{\alpha}(\frac{\tilde{x}}{1+a\tilde{x}\tilde{x}'}).\]
From this formula we get

\begin{align*}
[x,x'] & =p_{\alpha}(\tilde{x})p_{-\alpha}(\tilde{x}')p_{\alpha}(-\tilde{x})p_{-\alpha}(-\tilde{x}')\\
 & =p_{-\alpha}(\frac{\tilde{x}'}{1+a\tilde{x}\tilde{x}'})\check{\alpha}(1+a\tilde{x}\tilde{x}')p_{\alpha}(\frac{\tilde{x}}{1+a\tilde{x}\tilde{x}'})\\
 & \hspace{.45cm}p_{-\alpha}(\frac{-\tilde{x}'}{1+a\tilde{x}\tilde{x}'})\check{\alpha}(1+a\tilde{x}\tilde{x}')p_{\alpha}(\frac{-\tilde{x}}{1+a\tilde{x}\tilde{x}'})\\
 & =\check{\alpha}(1+a\tilde{x}\tilde{x}')^{2}p_{-\alpha}((1+a\tilde{x}\tilde{x}')^{4}\frac{\tilde{x}'}{1+a\tilde{x}\tilde{x}'})p_{\alpha}((1+a\tilde{x}\tilde{x}')^{2}\frac{\tilde{x}}{1+a\tilde{x}\tilde{x}'})\\
 & \hspace{.45cm}p_{-\alpha}((1+a\tilde{x}\tilde{x}')^{2}\frac{-\tilde{x}'}{1+a\tilde{x}\tilde{x}'})p_{\alpha}(\frac{-\tilde{x}}{1+a\tilde{x}\tilde{x}'})\\
 & \hspace{.45cm}\text{(using repeatedly the facts that }\forall\, t\in\mathbf{T}(A),z\in\mathbb{G}_{a}(A),\alpha\in\Phi,\text{ we have}\\
 & \hspace{.45cm}tp_{\alpha}(z)t^{-1}=p_{\alpha}(\alpha(t)z),\text{ and }\alpha\check{\alpha}(z)=z^{2}\text{)}\\
 & =\check{\alpha}(1+a\tilde{x}\tilde{x}')^{2}p_{-\alpha}(\tilde{x}'(1+a\tilde{x}\tilde{x}')^{3})p_{\alpha}(\tilde{x}(1+a\tilde{x}\tilde{x}'))\\
 & \hspace{.45cm}p_{-\alpha}(-\tilde{x}'(1+a\tilde{x}\tilde{x}'))p_{\alpha}(\frac{-\tilde{x}}{1+a\tilde{x}\tilde{x}'})\\
 & =\check{\alpha}(1+a\tilde{x}\tilde{x}')^{2}p_{-\alpha}(\tilde{x}')p_{\alpha}(\tilde{x}(1+a\tilde{x}\tilde{x}'))p_{-\alpha}(-\tilde{x}')p_{\alpha}(\frac{-\tilde{x}}{1+a\tilde{x}\tilde{x}'})\\
 & \hspace{.45cm}\text{(using }\tilde{x}\tilde{x}'^{2}=0\text{)}\\
 & =\check{\alpha}(1+a\tilde{x}\tilde{x}')^{2}p_{-\alpha}(\tilde{x}')p_{-\alpha}(\frac{-\tilde{x}'}{1-a\tilde{x}'\tilde{x}(1+\tilde{x}\tilde{x}')})\\
 & \hspace{.45cm}\check{\alpha}(1-a\tilde{x}'\tilde{x}(1+\tilde{x}\tilde{x}'))p_{\alpha}(\frac{\tilde{x}(1+\tilde{x}\tilde{x}')}{1-a\tilde{x}'\tilde{x}(1+\tilde{x}\tilde{x}')})p_{\alpha}(\frac{-\tilde{x}}{1+a\tilde{x}\tilde{x}'})\\
 & =\check{\alpha}(1+a\tilde{x}\tilde{x}')^{2}p_{-\alpha}(\tilde{x}')p_{-\alpha}(\frac{-\tilde{x}'}{1-a\tilde{x}\tilde{x}'})\\
 & \hspace{.45cm}\check{\alpha}(1-a\tilde{x}\tilde{x}')p_{\alpha}(\frac{\tilde{x}(1+a\tilde{x}\tilde{x}')}{1-a\tilde{x}\tilde{x}'})p_{\alpha}(\frac{-\tilde{x}}{1+a\tilde{x}\tilde{x}'})\\
 & =\check{\alpha}(1+a\tilde{x}\tilde{x}')^{2}\check{\alpha}(1-a\tilde{x}\tilde{x}')p_{-\alpha}(\tilde{x}'(1-a\tilde{x}\tilde{x}')^{2})p_{-\alpha}(\frac{-\tilde{x}'}{1-a\tilde{x}\tilde{x}'}(1-a\tilde{x}\tilde{x}')^{2})\\
 & \hspace{.45cm}p_{\alpha}(\frac{\tilde{x}(1+a\tilde{x}\tilde{x}')}{1-a\tilde{x}\tilde{x}'})p_{\alpha}(\frac{-\tilde{x}}{1+a\tilde{x}\tilde{x}'})\\
 & =\check{\alpha}((1+a\tilde{x}\tilde{x}')^{2}(1-a\tilde{x}\tilde{x}'))p_{-\alpha}(0)p_{\alpha}(\frac{\tilde{x}\tilde{x}'}{1-a\tilde{x}\tilde{x}'})\\
 & =\check{\alpha}(1+a\tilde{x}\tilde{x}')p_{\alpha}(\frac{\tilde{x}\tilde{x}'}{1-a\tilde{x}\tilde{x}'}).\end{align*}
Now \[
\check{\alpha}(1+a\tilde{x}\tilde{x}')\in\Ker(\mathbf{T}^{\alpha}(A)\rightarrow\mathbf{T}^{\alpha}(A/\mathfrak{m}^{r-1}))\cong\mathcal{T}^{\alpha}\]
and \[
p_{\alpha}(\frac{\tilde{x}\tilde{x}'}{1-a\tilde{x}\tilde{x}'})\in\Ker(\mathbf{U}_{\alpha}(A)\rightarrow\mathbf{U}_{\alpha}(A/\mathfrak{m}^{r-1}))\cong(U_{\alpha})^{r-1}.\]
Using the canonical isomorphism $G\cong\mathbf{G}(A)$, we conclude
that for elements $x\in(U_{\alpha})^{b}$, and $x'\in(U_{-\alpha})^{c}$
we have $[x,x']\in\mathcal{T}^{\alpha}(U_{\alpha})^{r-1}$. Finally,
because of the semidirect product $TU$ in $G$, the decomposition
of $[x,x']$ as an element of $\mathcal{T}^{\alpha}(U_{\alpha})^{r-1}$
is unique. 
\end{proof}
\begin{lem}
\label{lem: induction-commutator}We fix an order on $\Phi^{+}$.
For any $z\in V$, and $\beta\in\Phi^{+}$, define elements $x_{\beta}^{z}\in U_{\beta}$
via the decomposition $z=\prod_{\beta\in\Phi^{+}}x_{\beta}^{z}$ (factors
written using the given order on $\Phi^{+}$). Let $\alpha\in\Phi^{-}$
and $a$ be an integer such that $1\leq a\leq r-1$. Suppose that
$z\in V^{a}$ is an element such that $x_{\beta}^{z}\in(U_{\beta})^{a+1}$,
for all $\beta\in\Phi^{+}$ with $\height(\beta)>\height(-\alpha)$.
Then for any $\xi\in(U_{\alpha})^{r-a-1}$, we have\[
[\xi,z]=\tau_{\xi,z}\omega_{\xi,z},\quad\text{where }\tau_{\xi,z}\in\mathcal{T}^{\alpha}\text{ and }\omega_{\xi,z}\in(V^{-})^{r-1}.\]

\end{lem}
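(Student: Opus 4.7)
The plan is to expand $[\xi,z]$ as a product of elementary commutators $[\xi,x_\beta^z]$ and then to classify each factor using Lemma~\ref{lem:roots-commutator}. I would first observe that $\xi\in(U_\alpha)^{r-a-1}\subseteq G^{r-a-1}$ and $x_\beta^z\in(U_\beta)^a\subseteq G^a$ for every $\beta\in\Phi^+$; the latter uses $z\in V^a$ together with the uniqueness of the decomposition $V=\prod_\beta U_\beta$, which passes to the reduction modulo $\mathfrak{m}^a$. By Lemma~\ref{lem:commutator-Iwahori}(a) each $[\xi,x_\beta^z]$ lies in $G^{r-1}$, and since $r\geq 2$ the inclusions $[G^{r-1},G^1]\subseteq G^r=\{1\}$ and $[G^{r-1},G^{r-1}]\subseteq G^{2r-2}=\{1\}$ show that $G^{r-1}$ is abelian and centralized by $G^1$. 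Applying the identity $[\xi,yy']=[\xi,y]\cdot y[\xi,y']y^{-1}$ inductively on the number of factors and absorbing the conjugations (trivial by centralization), I obtain
\[
[\xi,z]=\prod_{\beta\in\Phi^+}[\xi,x_\beta^z],
\]
where the order of the product is irrelevant.

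Next I would classify each $[\xi,x_\beta^z]$ in three cases using Lemma~\ref{lem:roots-commutator}. For $\beta$ with $\height(\beta)>\height(-\alpha)$, the hypothesis gives $x_\beta^z\in(U_\beta)^{a+1}$ and $(r-a-1)+(a+1)=r$, so part~(a) yields $[\xi,x_\beta^z]=1$. For $\beta=-\alpha$, one has $(r-a-1)+a=r-1$ and $(r-a-1)+2a=r+a-1\geq r$, so part~(c) gives $[\xi,x_{-\alpha}^z]=\tau\,u$ with $\tau\in\mathcal{T}^\alpha$ and $u\in(U_\alpha)^{r-1}\subseteq(V^-)^{r-1}$ (since $\alpha\in\Phi^-$). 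For every remaining $\beta$ (with $\beta\neq-\alpha$ and $\height(\beta)\leq\height(-\alpha)$) part~(b) applies, yielding
\[
[\xi,x_\beta^z]=\prod_{\substack{i,j\geq 1\\ i\alpha+j\beta\in\Phi}}u_{i,j},\qquad u_{i,j}\in(U_{i\alpha+j\beta})^{r-1}.
\]

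The hardest step is to show, in this last case, that each $u_{i,j}$ actually lies in $(V^-)^{r-1}$, i.e.\ that $i\alpha+j\beta\in\Phi^-$. Writing $\gamma=-\alpha$, the contrary assumption $j\beta-i\gamma\in\Phi^+$ forces $j\,\height(\beta)>i\,\height(\gamma)\geq i\,\height(\beta)$, hence $j>i\geq 1$; ruling out such configurations is the root-combinatorial crux of the argument, handled by examining root strings and the Chevalley commutator structure constants type by type. Once this is verified, the surviving contributions assemble as follows: only the $\beta=-\alpha$ case supplies a torus component, which lies in $\mathcal{T}^\alpha$, while every other surviving factor lies in the subgroup $(V^-)^{r-1}$. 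Because $G^{r-1}$ is abelian, the product of the latter is a single element $\omega_{\xi,z}\in(V^-)^{r-1}$, giving $[\xi,z]=\tau_{\xi,z}\,\omega_{\xi,z}$ as claimed.
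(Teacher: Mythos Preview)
Your overall strategy coincides with the paper's: both reduce to analysing the individual commutators $[\xi,x_\beta^z]$ via Lemma~\ref{lem:roots-commutator}, and your direct product expansion (using that $G^{r-1}$ is central in $G^{1}$) is just the unrolled form of the paper's induction on $N_z=\#\{\beta:x_\beta^z\neq 1\}$. The gap is at the step you correctly flag as the crux. The assertion that $i\alpha+j\beta\in\Phi^{-}$ whenever $i,j\geq 1$, $\beta\neq -\alpha$ and $\height(\beta)\leq\height(-\alpha)$ is \emph{false} in type $G_2$: with simple roots $\alpha_1$ (short) and $\alpha_2$ (long), take $\alpha=-(3\alpha_1+2\alpha_2)$ and $\beta=2\alpha_1+\alpha_2$; then $\height(\beta)=3\leq 5=\height(-\alpha)$, yet $\alpha+2\beta=\alpha_1\in\Phi^{+}$ and $\alpha+3\beta=3\alpha_1+\alpha_2\in\Phi^{+}$, with nonzero Chevalley structure constants. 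So neither a pure root-string count nor an appeal to vanishing structure constants will close this case. (The paper disposes of the same step in one line, asserting that $i\alpha+i'\beta\in\Phi^{+}$ forces $\alpha+\beta\in\Phi^{+}$ ``by general properties of root systems'' --- which is the same claim, with the same counterexample.)

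What actually rescues the argument is the sharper level estimate implicit in the \emph{proof} of Lemma~\ref{lem:roots-commutator}(b): the Chevalley formula gives $u_{i,j}\in(U_{i\alpha+j\beta})^{ib+jc}$, not merely level $b+c$. With $b=r-a-1$ and $c=a$, any term with $j\geq 2$ satisfies $ib+jc\geq b+2c=r+a-1\geq r$ (since $a\geq 1$), so those $u_{i,j}$ vanish outright. For $j=1$ your height argument is valid: $\height(i\alpha+\beta)=\height(\beta)-i\,\height(-\alpha)\leq(1-i)\height(-\alpha)\leq 0$, and no root has height zero, so $i\alpha+\beta\in\Phi^{-}$ whenever it is a root. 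This replaces the unprovable root-system claim and completes the proof along exactly the lines you and the paper set up.
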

\begin{proof}
We argue by induction on $N_{z}=\#\{\beta\in\Phi^{+}\mid x_{\beta}^{z}\neq1\}$.
If $N_{z}=0$ the result is clear. Assume now that $N_{z}=1$ so that
$z\in U_{\beta}$ with $\beta\in\Phi^{+}$. If $\alpha=-\beta$ the
result follows from Lemma \ref{lem:roots-commutator}(c). If $\alpha\neq-\beta$
and $\height(\beta)>\height(-\alpha)$, then $z\in(U_{\alpha})^{a+1}$
and $\xi z=z\xi$ by Lemma \ref{lem:roots-commutator}(b). If $\alpha\neq-\beta$
and $\height(\beta)\leq\height(-\alpha)$, then by Lemma \ref{lem:roots-commutator}(b)
we have $[\xi,z]=\prod_{i,i'\geq1,i\alpha+i'\beta\in\Phi}u_{i,i'}$
with $u_{i,i'}\in(U_{i\alpha+i'\beta})^{r-1}$, and it is enough to
show that if $i,i'\geq1$, we cannot have $i\alpha+i'\beta\in\Phi^{+}$.
Now if we had $ $$i\alpha+i'\beta\in\Phi^{+}$ for some $i,i'\geq1$,
then general properties of root systems imply that $\alpha+\beta\in\Phi^{+}$,
and hence we would have $ $ $\height(\beta)>\height(-\alpha)$; contradiction.

Assume now that $N_{z}\geq2$, and that the assertion is true for
all $z'$ such that $N_{z'}\leq N_{z}$. We can write $z=z'z''$ where
$z',z''\in V^{a}$, $N_{z'}<N_{z}$, $N_{z''}<N_{z}$. Using the induction
hypothesis, we have \[
\xi z=\xi z'z''=\tau_{\xi,z'}\omega_{\xi,z'}z'\xi z''=\tau_{\xi,z'}\omega_{\xi,z'}z'\tau_{\xi,z''}\omega_{\xi,z''}z''\xi,\]
where $\tau_{\xi,z'}\in\mathcal{T}^{\alpha}$ and $\omega_{\xi,z'}\in(V^{-})^{r-1}$.
Since $a+r-1\geq r$, we have $z'\tau_{\xi,z''}=\tau_{\xi,z''}z'$
and $z'\omega_{\xi,z''}=\omega_{\xi,z''}z'$, by Lemma \ref{lem:commutator-Iwahori}.
Hence\[
\xi z=\tau_{\xi,z'}\omega_{\xi,z'}z'\tau_{\xi,z''}\omega_{\xi,z''}z''\xi=\tau_{\xi,z'}\omega_{\xi,z'}\tau_{\xi,z''}\omega_{\xi,z''}z'z''\xi=\tau_{\xi,z'}\tau_{\xi,z''}\omega_{\xi,z'}\omega_{\xi,z''}z\xi,\]
and so \[
[\xi,z]=\tau_{\xi,z}\omega_{\xi,z},\]
where $\tau_{\xi,z}=\tau_{\xi,z'}\tau_{\xi,z''}$ and $\omega_{\xi,z}=\omega_{\xi,z'}\omega_{\xi,z''}$. 
\end{proof}
Let $\mathbf{Z}=\mathbf{U}^{-}\cap\mathbf{V}$, $Z=(\mathcal{F}_{A}\mathbf{Z})(k)=U^{-}\cap V$,
and $\Phi'=\{\beta\in\Phi\mid\mathbf{U}_{\beta}\subseteq\mathbf{Z}\}$.
We obviously have $\Phi'\subseteq\Phi^{+}$. Let $\mathcal{X}$ be
the set of all subsets $I\subseteq\Phi'$ such that $I\neq\emptyset$
and $\height:\Phi^{+}\rightarrow\mathbb{N}$ is constant on $I$.

To any $z\in Z^{1}-\{1\}$ we associate a pair $(a,I_{z})$ where
$a$ is an integer $1\leq a\leq r-1$, and $I_{z}\in\mathcal{X}$,
as follows. We define $a$ by the condition that $z\in Z^{a,*}$.
If $x_{\beta}^{z}\in U_{\beta}$ are defined as in Lemma \ref{lem: induction-commutator}
in terms of a fixed order on $\Phi^{+}$, then $x_{\beta}^{z}\in(U_{\beta})^{a}$
for all $\beta\in\Phi'$ and $x_{\beta}^{z}=1$ for all $\beta\in\Phi^{+}-\Phi'$
(this is a consequence of the formula $Z^{a}=\prod_{\beta\in\Phi'}U_{\beta}^{a}$).
We define the set $I_{z}$ as \[
I_{z}=\{\alpha'\in\Phi'\mid x_{\alpha'}^{z}\in(U_{\alpha'})^{a,*}\text{ and }x_{\beta}^{z}\in(U_{\beta})^{a+1}\ \forall\,\beta\in\Phi^{+}\ \text{s.\ t.}\,\height(\beta)>\height(\alpha')\}.\]
The definition of $I_{z}$ does not depend on the choice of order
on $\Phi^{+}$. For any integer $1\leq a\leq r-1$ and $I\in\mathcal{X}$,
let $Z^{a,*,I}$ be the set of all $z\in Z^{1}-\{1\}$ such that $z\in Z^{a,*}$
and $I=I_{z}$. Thus we have a partition \setcounter{equation}{0}\renewcommand{\theequation}{\fnsymbol{equation}}
\begin{equation}
Z^{1}-\{1\}=\bigsqcup_{\substack{1\leq a\leq r-1\\
I\in\mathcal{X}}
}Z^{a,*,I}.\label{eq:(*)}\end{equation}

\section{The main results}

Recall the definitions of the groups $T,T',U,U',\mathcal{T},\mathcal{T}'$
and the variety $\Sigma$, from Section \ref{sec:Lemmas}. After having
set up the general framework, we are now ready to give results generalizing
those in \cite{lusztig-preprint-published}, with the structures of
the proofs remaining more or less the same. All the ideas of the proofs
in this section are due to Lusztig. The only thing that requires a
comment here is the use of the elements $\hat{w}$. In \cite{lusztig-preprint-published},
the inclusion $G_{1}\subseteq G$ (in our notation) allows one to
view the elements of $N(T_{1},T_{1}')$ as elements of $N(T,T')$.
However, in the general case which we consider here there is no such
inclusion, and instead we have to use lifts $\hat{w}\in N(T,T')$
of the elements $\dot{w}\in N(T_{1},T_{1}')$. The following Theorem~\ref{thm:Big-thm}
does not depend on the choice of lift $\hat{w}$ for each $\dot{w}$.
This can be seen from the proof, because we show that \[
\sum_{j\in\mathbb{Z}}(-1)^{j}\dim H_{c}^{j}(\Sigma_{w})_{\theta^{-1},\theta'}=\sum_{j\in\mathbb{Z}}(-1)^{j}\dim H_{c}^{j}(\hat{\Sigma}_{\hat{w}})_{\theta^{-1},\theta'},\]
where $\hat{\Sigma}_{\hat{w}}$ is the variety defined below, $\hat{w}$
is an arbitrary lift of $\dot{w}$, and the latter sum is equal to
$1$ if $F(w)=w$ and $\text{Ad}(\hat{w}):T'^{F}\rightarrow T^{F}$
carries $\theta$ to $\theta'$, and equals $0$ otherwise. Thus,
if $\hat{w}'$ is another lift of $\dot{w}$, then $\sum_{j\in\mathbb{Z}}(-1)^{j}\dim H_{c}^{j}(\hat{\Sigma}_{\hat{w}})_{\theta^{-1},\theta'}=\sum_{j\in\mathbb{Z}}(-1)^{j}\dim H_{c}^{j}(\hat{\Sigma}_{\hat{w}'})_{\theta^{-1},\theta'}$,
and so whenever $F(w)=w$, we see that $\text{Ad}(\hat{w}):T'^{F}\rightarrow T^{F}$
carries $\theta$ to $\theta'$ if and only if $\text{Ad}(\hat{w}'):T'^{F}\rightarrow T^{F}$
carries $\theta$ to $\theta'$.

\begin{thm}
\label{thm:Big-thm}Let $\theta\in\widehat{T^{F}}$ and $ $$\theta'\in\widehat{T'^{F}}$.
If $r\geq2$, assume that $\theta'$ is regular. Then $\sum_{j\in\mathbb{Z}}(-1)^{j}\dim H_{c}^{j}(\Sigma)_{\theta^{-1},\theta'}$
is equal to the number of $w\in W(T_{1},T_{1}')^{F}$ such that $\mathrm{Ad}(\hat{w}):T'^{F}\rightarrow T^{F}$
carries $\theta$ to $\theta'$.
\end{thm}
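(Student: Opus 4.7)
The strategy follows Lusztig's proof of Lemma~1.9 in \cite{lusztig-preprint-published}, adapted to our group scheme setting. I would first reduce the computation to each stratum $\Sigma_w$ separately, and then for each $w$ evaluate the alternating sum of cohomology dimensions; as emphasized in the paper, the case $r=1$ and the case $r\geq 2$ require different arguments, and the latter is where regularity enters.

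The first step exploits the partition $\Sigma = \bigsqcup_w \Sigma_w$ together with the fact, noted in the proof of Lemma~\ref{lem:Lusztig's 1.4}, that the unions $\bigcup_{w' \leq w} \Sigma_{w'}$ are closed in $\Sigma$. The associated spectral sequence gives
\[
\sum_{j \in \mathbb{Z}} (-1)^j \dim H_c^j(\Sigma)_{\theta^{-1}, \theta'} = \sum_{w \in W(T_1, T_1')} \sum_{j \in \mathbb{Z}} (-1)^j \dim H_c^j(\Sigma_w)_{\theta^{-1}, \theta'},
\]
so it suffices to show that the inner sum equals $1$ when $F(w) = w$ and $\mathrm{Ad}(\hat{w})$ sends $\theta$ to $\theta'$, and equals $0$ otherwise. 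For each $w$, I would replace $\Sigma_w$ by the variety $\tilde{\Sigma}_{\hat{w}}$ of Lemma~\ref{lem:Lusztig's 1.3}, then perform the substitutions already used there to arrive at an explicit model in the variables $(x, x', u, u', k, \nu) \in F(U) \times F(U') \times U_{\hat{w}} \times U' \times K \times \hat{w} T'$.

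The case $F(w) \neq w$ should be handled by exhibiting a free action of a connected subgroup of $T \times T'$ on the $\theta^{-1} \boxtimes \theta'$-isotypic projection: writing $F(\hat{w}) = \hat{w}\gamma_w$, the twist produces an orbit of the subgroup $H^0$ of Lemma~\ref{lem:Lusztig's 1.1} on which $\theta^{-1} \boxtimes \theta'$ does not factor, forcing the cohomology to vanish. In the case $F(w) = w$, I would first deal with $r = 1$: here $K$ is trivial and the variety reduces directly to the classical Deligne--Lusztig setting, from which \cite{delignelusztig}, Thm.~6.8 yields the required indicator.

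The hard case is $r \geq 2$. After trivializing $x, x', u, u'$ via substitutions (making essential use of the Iwahori decomposition of Lemma~\ref{lem:commutator-Iwahori}), the remaining content of $\Sigma_w$ is a contribution from $k \in K$ twisted by $t' \in T'$. To show that the locus $k \neq 1$ contributes zero, I would use the partition \eqref{eq:(*)} of $Z^1 - \{1\}$ and treat each piece $Z^{a,*,I}$ separately. Lemmas~\ref{lem:roots-commutator}(c) and \ref{lem: induction-commutator} provide commutators of the form $\tau_{\xi, z} \omega_{\xi, z}$ with $\tau_{\xi, z} \in \mathcal{T}^{\alpha}$ and $\omega_{\xi, z} \in (V^-)^{r-1}$, which I would use to construct a free action of a one-parameter Frobenius-stable subgroup of $\mathcal{T}^\alpha$ (for a suitable $\alpha \in I$) on each such stratum. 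The regularity of $\theta'$, together with Lemmas~\ref{lem:Lusztig's 1.1} and \ref{lem:regularity test} to transport characters across $\mathrm{Ad}(\hat{w})$, ensures that the induced character on this one-parameter subgroup is nontrivial, so its contribution to the $\theta^{-1} \boxtimes \theta'$-isotypic part vanishes. The only surviving locus is $k = 1$, and a direct computation there yields exactly the required count. The main obstacle will be organizing the regularity argument uniformly across all pairs $(a, I) \in \{1,\dots,r-1\} \times \mathcal{X}$, since $\alpha \in I$ must be chosen compatibly with the height constraint $\height(\beta) > \height(\alpha')$ appearing in the definition of $I_z$, and this is precisely the point where the technical Lemma~\ref{lem: induction-commutator} does its work.
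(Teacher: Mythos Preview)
Your overall architecture---stratify by $w$, pass to an explicit model, split off the locus where the ``kernel'' variable is $1$, and use regularity to kill the complementary locus---matches the paper. But two steps diverge in ways that matter.

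First, your treatment of $F(w)\neq w$ is not how the paper proceeds and, as stated, does not work. You propose to dispose of $F(w)\neq w$ up front by invoking the connected subgroup $H^0$ of Lemma~\ref{lem:Lusztig's 1.1} ``on which $\theta^{-1}\boxtimes\theta'$ does not factor.'' But the $H^0$-argument (as in the proof of Lemma~\ref{lem:Lusztig's 1.3}) only shows that $\theta^{-1}\boxtimes\theta'$ is \emph{trivial} on $(T^F\times T'^F)\cap H^0$ whenever the cohomology is nonzero; it neither forces $F(w)=w$ nor produces vanishing of the Euler characteristic. The paper does not separate this case at all: for \emph{every} $w$ it first kills the locus $z\neq 1$ using regularity of $\theta'$, and only then analyses the $z=1$ piece $\hat\Sigma''_{\hat w}$ via a genuine torus $\tilde H_*^0$ acting on it. The fixed-point set of $\tilde H_*^0$ turns out to be $\{(1,1,1,1,1,\tau')\mid \hat w\tau'\in(\hat w T')^F\}$, which is empty precisely when $F(w)\neq w$; that is where the non-$F$-stable $w$ are eliminated. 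In particular, the regularity argument is needed for all $w$, not only the $F$-stable ones.

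Second, in the $z\neq 1$ step you speak of a ``free action of a one-parameter Frobenius-stable subgroup of $\mathcal T^\alpha$.'' The paper is explicit that the family of maps $f_{t'}$ it constructs (via the section $\psi$ of $\pi$ and the isomorphism $\lambda_{z^{-1}}$) is \emph{not} a group action: $f_{t'_1 t'_2}\neq f_{t'_1}\circ f_{t'_2}$ in general, because $\psi$ is not a homomorphism. What is used is only that each $f_{t'}$ is an isomorphism varying algebraically with $t'\in\mathcal H'$, so the induced endomorphism of $H^j_c$ is constant on the connected component $\mathcal H'^0$; restricting to $\mathcal T'^F\cap\mathcal H'^0$ and invoking regularity of $\chi=\theta'|_{\mathcal T'^F}$ directly gives the vanishing. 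Lemmas~\ref{lem:Lusztig's 1.1} and~\ref{lem:regularity test} are not used here. A smaller point: the paper works with a model $\hat\Sigma_{\hat w}$ parametrised by the full $U\times U'\times Z^1$ (with $Z^1=(U^-)^1\cap\hat w(U'^-)^1\hat w^{-1}$), mapping to $\Sigma_w$ as a locally trivial affine fibration, rather than the variety $\tilde\Sigma_{\hat w}$ of Lemma~\ref{lem:Lusztig's 1.3} with $U_{\hat w}$ and $K$; it is in this model that the $f_{t'}$ are written down.
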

\begin{proof}
Using the partition $\Sigma=\bigsqcup_{w}\Sigma_{w}$ and the additivity
of Lefschetz numbers (cf.~\cite{dignemichel} 10.7) we see that it
is enough to prove that $ $$\sum_{j\in\mathbb{Z}}(-1)^{j}\dim H_{c}^{j}(\Sigma_{w})_{\theta^{-1},\theta'}$
is equal to $1$ if $F(w)=w$ and $\text{Ad}(\hat{w}):T'^{F}\rightarrow T^{F}$
carries $\theta$ to $\theta'$, and equals $0$ otherwise. We now
fix $w\in W(T_{1},T_{1}')$. We have\[
\Sigma_{w}=\{(x,x',y)\in F(U)\times F(U')\times G\mid xF(y)=yx',\ y\in UG^{1}\hat{w}T'U'=UZ^{1}\hat{w}T'U'\},\]
where $Z^{1}=(U^{-})^{1}\cap\hat{w}(U'{}^{-})^{1}\hat{w}^{-1}$ (the
equality $ $$UG^{1}\hat{w}T'U'=UZ^{1}\hat{w}T'U'$ follows from Lemma~\ref{lem:Unique decomposition}).
Let \begin{align*}
\hat{\Sigma}_{\hat{w}} & =\{(x,x',u,u',z,\tau')\in F(U)\times F(U')\times U\times U'\times Z^{1}\times T'\mid\\
 & \hspace{.45cm}xF(u)F(z)F(\hat{w})F(\tau')F(u')=uz\hat{w}\tau'u'x'\}.\end{align*}
The map $\hat{\Sigma}_{\hat{w}}\rightarrow\Sigma_{w}$ given by $(x,x',u,u',z,\tau')\mapsto(x,x',uz\hat{w}\tau'u')$
is a locally trivial fibration with all fibres isomorphic to a fixed
affine space. This map is compatible with the $T^{F}\times T'^{F}$-actions
where $T^{F}\times T'^{F}$ acts on $\hat{\Sigma}_{\hat{w}}$ by\renewcommand{\theequation}{\alph{equation}}\begin{align}
(t,t') & :(x,x',u,u',z,\tau')\label{eq:Big-thm (a)}\\
 & \longmapsto(txt^{-1},t'x't'^{-1},tut^{-1},t'u't'^{-1},tzt^{-1},\hat{w}^{-1}t\hat{w}\tau't'^{-1}).\nonumber \end{align}
Hence, by \cite{lusztigexpo}, 1.9 it is enough to show that $\sum_{j\in\mathbb{Z}}(-1)^{j}\dim H_{c}^{j}(\hat{\Sigma}_{\hat{w}})_{\theta^{-1},\theta'}$
is equal to $1$ if $F(w)=w$ and $\text{Ad}(\hat{w}):T'^{F}\rightarrow T^{F}$
carries $\theta$ to $\theta'$, and equals $0$ otherwise.

By the change of variables $xF(u)\mapsto x$, $x'F(u')^{-1}\mapsto x'$
we may rewrite $\hat{\Sigma}_{\hat{w}}$ as \begin{align*}
\hat{\Sigma}_{\hat{w}} & =\{(x,x',u,u',z,\tau')\in F(U)\times F(U')\times U\times U'\times Z^{1}\times T'\mid\\
 & \hspace{.45cm}xF(z)F(\hat{w})F(\tau')=uz\hat{w}\tau'u'x'\},\end{align*}
with the $T^{F}\times T'^{F}$-action still given by \eqref{eq:Big-thm (a)}.
We have a partition $\hat{\Sigma}_{\hat{w}}=\hat{\Sigma}'_{\hat{w}}\sqcup\hat{\Sigma}''_{\hat{w}}$,
where\begin{align*}
\hat{\Sigma}_{\hat{w}}' & =\{(x,x',u,u',z,\tau')\in F(U)\times F(U')\times U\times U'\times(Z^{1}-\{1\})\times T'\mid\\
 & \hspace{.45cm}xF(z)F(\hat{w})F(\tau')=uz\hat{w}\tau'u'x'\},\\
\hat{\Sigma}_{\hat{w}}'' & =\{(x,x',u,u',1,\tau')\in F(U)\times F(U')\times U\times U'\times\{1\}\times T'\mid\\
 & \hspace{.45cm}xF(\hat{w})F(\tau')=u\hat{w}\tau'u'x'\},\end{align*}
are stable under the $T^{F}\times T'^{F}$-action. It is then enough
to show that\begin{align}
 & \sum_{j\in\mathbb{Z}}(-1)^{j}\dim H_{c}^{j}(\hat{\Sigma}_{\hat{w}}'')_{\theta^{-1},\theta'}\text{ is equal to }1\text{ if }F(w)=w\label{eq:Big-thm (b)}\\
 & \text{ and }\text{Ad}(\hat{w}):T'^{F}\rightarrow T^{F}\text{ carries }\theta\text{ to }\theta',\text{ and equals }0\text{ otherwise}.\nonumber \\
\nonumber \\ & H_{c}^{j}(\hat{\Sigma}_{\hat{w}}')_{\theta^{-1},\theta'}=0\text{,\quad for all }j.\label{eq:Big-thm (c)}\end{align}
We first prove \eqref{eq:Big-thm (c)}. For $r=1$ we have $\hat{\Sigma}_{\hat{w}}'=\emptyset$,
so in this case \eqref{eq:Big-thm (c)} is clear. Suppose now that
$r\geq2$. If $M$ is a $\mathcal{T}'^{F}$-module we shall write
$M_{(\chi)}$ for the subspace of $M$ on which $\mathcal{T}'^{F}$
acts according to $\chi$, that is, $M_{(\chi)}=\{m\in M\mid t'm=\chi(t')m,\ \forall t'\in\mathcal{T}'^{F}\}$.
Now $\mathcal{T}'^{F}$ acts on $\hat{\Sigma}_{\hat{w}}'$ by\[
t':(x,x',u,u',z,\tau')\longmapsto(x,t'x't'^{-1},u,t'u't'^{-1},z,\tau't'^{-1}).\]
Hence $H_{c}^{j}(\hat{\Sigma}_{\hat{w}}')$ becomes a $\mathcal{T}'^{F}$-module.
Since $H_{c}^{j}(\hat{\Sigma}_{\hat{w}}')=\bigoplus_{\chi}H_{c}^{j}(\hat{\Sigma}_{\hat{w}}')_{(\chi)}$,
it is enough to show that $H_{c}^{j}(\hat{\Sigma}_{\hat{w}}')_{(\chi)}=0$.
We shall use the definitions and results in Lemmas \ref{lem:roots-commutator},
\ref{lem: induction-commutator}, and the partition \eqref{eq:(*)}
in the end of Section~\ref{sec:Lemmas}, relative to $U,U^{-},V,V^{-}$,
where we take $U,U^{-}$ as above, and $V=\hat{w}(U')^{-}\hat{w}^{-1}$,
$V^{-}=\hat{w}U'\hat{w}^{-1}$. The partition \eqref{eq:(*)} gives
rise to a partition\[
\hat{\Sigma}_{\hat{w}}'=\bigsqcup_{\substack{1\leq a\leq r-1\\
I\in\mathcal{X}}
}\hat{\Sigma}_{\hat{w}}^{a,I},\text{ where }\hat{\Sigma}_{\hat{w}}^{a,I}=\{(x,x',u,u',z,\tau')\in\hat{\Sigma}_{\hat{w}}'\mid z\in Z^{a,*,I}\}.\]
It is easy to see that there is a total order on the set of indices
$(a,I)$ such that the union of the $\hat{\Sigma}_{\hat{w}}^{a,I}$
for $(a,I)$ less than or equal to some given $(a^{0},I^{0})$, is
closed in $\hat{\Sigma}_{\hat{w}}'$. Since the subsets $\hat{\Sigma}_{\hat{w}}^{a,I}$
are stable under the action of $\mathcal{T}'^{F}$, we see that in
order to prove \eqref{eq:Big-thm (c)}, it is enough to show that\begin{equation}
H_{c}^{j}(\hat{\Sigma}_{\hat{w}}^{a,I})_{(\chi)}=0,\quad\mbox{for any fixed }(a,I).\label{eq:Big-thm (d)}\end{equation}
We choose $\alpha'\in I$, and let $\alpha=-\alpha'$. Then $U_{\alpha}\subseteq U\cap V^{-}=U\cap\hat{w}U'\hat{w}^{-1}$.

For any $z\in Z^{a,*}$ and $\xi\in(U_{\alpha})^{r-a-1}$ we have
\[
[\xi,z]=\tau_{\xi,z}\omega_{\xi,z},\]
where $\tau_{\xi,z}\in\mathcal{T}^{\alpha}$ and $\omega_{\xi,z}\in\hat{w}(U')^{r-1}\hat{w}^{-1}$
are uniquely determined (cf.~Lemma~\ref{lem: induction-commutator}).
Moreover, the map $(U_{\alpha})^{r-a-1}\rightarrow\mathcal{T}^{\alpha}$,
$\xi\mapsto\tau_{\xi,z}$ factors through an isomorphism \[
\lambda_{z}:(U_{\alpha})^{r-a-1}/(U_{\alpha})^{r-a}\longiso\mathcal{T}^{\alpha}.\]
Let $\pi:(U_{\alpha})^{r-a-1}\rightarrow(U_{\alpha})^{r-a-1}/(U_{\alpha})^{r-a}$
be the canonical homomorphism. Since $\mathbf{U}_{\alpha}$ is an
affine space, there exists a morphism of algebraic varieties\[
\psi:(U_{\alpha})^{r-a-1}/(U_{\alpha})^{r-a}\longrightarrow(U_{\alpha})^{r-a-1}\]
 such that $\pi\circ\psi=\text{Id}$ and $\psi(1)=1$. Let\[
\mathcal{H}'=\{t'\in\mathcal{T}'\mid t'^{-1}F(t')\in\hat{w}^{-1}\mathcal{T}^{\alpha}\hat{w}\}.\]
This is a closed subgroup of $\mathcal{T}'$. For any $t'\in\mathcal{H}'$
we define $f_{t'}:\hat{\Sigma}_{\hat{w}}^{a,I}\rightarrow\hat{\Sigma}_{\hat{w}}^{a,I}$
by \[
f_{t'}(x,x',u,u',z,\tau')=(xF(\xi),\hat{x}',u,F(t')^{-1}u'F(t'),z,\tau'F(t')),\]
where\[
\xi=(\psi\lambda_{z^{-1}}^{-1}(\hat{w}F(t')^{-1}t'\hat{w}^{-1}))^{-1}\in(U_{\alpha})^{r-a-1}\subseteq U\cap\hat{w}U'\hat{w}^{-1},\]
and $\hat{x}'\in G$ is determined by the condition that defines the
variety $\hat{\Sigma}_{\hat{w}}^{a,I}$, that is,\[
xF(\xi)F(z)F(\hat{w})F(\tau'F(t'))=uz\hat{w}\tau'F(t')F(t')^{-1}u'F(t')\hat{x}'.\]
In order for this to be well-defined we must check that $\hat{x}'\in F(U')$.
Thus we must show that \[
xF(\xi)F(z)F(\hat{w})F(\tau'F(t'))\in uz\hat{w}\tau'u'F(t')F(U').\]
By Lemma~\ref{lem: induction-commutator} we have \[
\xi z=(z^{-1}\xi^{-1})^{-1}=(\omega_{\xi^{-1},z^{-1}}^{-1}\tau_{\xi^{-1},z^{-1}}^{-1}\xi^{-1}z^{-1})^{-1}=z\xi\tau_{\xi^{-1},z^{-1}}\omega_{\xi^{-1},z^{-1}}.\]
Thus the above condition is equivalent to \[
xF(z)F(\xi)F(\tau_{\xi^{-1},z^{-1}})F(\omega_{\xi^{-1},z^{-1}})F(\hat{w})F(\tau'F(t'))\in uz\hat{w}\tau'u'F(t')F(U').\]
Since $xF(z)=uz\hat{w}\tau'u'x'F(\tau')^{-1}F(\hat{w})^{-1}$, it
is enough to show that \begin{eqnarray*}
 & uz\hat{w}\tau'u'x'F(\tau')^{-1}F(\hat{w})^{-1}F(\xi)F(\tau_{\xi^{-1},z^{-1}})F(\omega_{\xi^{-1},z^{-1}})F(\hat{w})F(\tau'F(t'))\\
 & \in uz\hat{w}\tau'u'F(t')F(U'),\end{eqnarray*}
or that \[
x'F(\tau')^{-1}F(\hat{w})^{-1}F(\xi)F(\tau_{\xi^{-1},z^{-1}})F(\omega_{\xi^{-1},z^{-1}})F(\hat{w})F(\tau'F(t'))\in F(t')F(U').\]
Since $x'\in F(U')$ and $F(\hat{w})^{-1}F(\omega_{\xi^{-1},z^{-1}})F(\hat{w})\in F(U')$,
it is enough to check that\[
F(\tau')^{-1}F(\hat{w})^{-1}F(\xi)F(\tau_{\xi^{-1},z^{-1}})F(\hat{w})F(\tau'F(t'))\in F(t')F(U').\]
Since $F(\hat{w}^{-1})F(\xi)F(\hat{w})\in F(U')$ it is enough to
check that \[
F(\tau')^{-1}F(\hat{w})^{-1}F(\tau_{\xi^{-1},z^{-1}})F(\hat{w})F(\tau'F(t'))\in F(t')F(U'),\]
or that \[
F(\hat{w})^{-1}F(\tau_{\xi^{-1},z^{-1}})F(\hat{w})F(F(t'))\in F(t')F(\tau')F(U')F(\tau')^{-1}=F(t')F(U'),\]
which is equivalent to\[
F(\hat{w})^{-1}F(\tau_{\xi^{-1},z^{-1}})F(\hat{w})F(F(t'))=F(t'),\]
that is, $\hat{w}^{-1}\tau_{\xi^{-1},z^{-1}}\hat{w}=F(t')^{-1}t'$,
or $\lambda_{z^{-1}}(\pi(\xi^{-1}))=\tau_{\xi^{-1},z^{-1}}$, which
holds because of the definitions of the element $\xi$ and the map
$\lambda_{z^{-1}}$.

Thus, $f_{t'}:\hat{\Sigma}_{\hat{w}}^{a,I}\rightarrow\hat{\Sigma}_{\hat{w}}^{a,I}$
is well-defined and has an obvious inverse, so is clearly an isomorphism
for any $t'\in\mathcal{H}'$. Note however that this does not define
an action of the group $\mathcal{H}'$ on $\hat{\Sigma}_{\hat{w}}^{a,I}$,
since $f_{t'_{1}t'_{2}}\neq f_{t_{1}'}\circ f_{t_{2}'}$ in general.
Nevertheless, $f_{t'}$ is in particular a well-defined isomorphism
for any $t'\in\mathcal{H}'^{0}$, where $\mathcal{H}'^{0}$ is the
connected component of $\mathcal{H}'$, and by general principles
(cf.~the proof of Prop.~6.4 in \cite{delignelusztig}), the induced
map $f_{t'}^{*}:H_{c}^{j}(\hat{\Sigma}_{\hat{w}}^{a,I})\rightarrow H_{c}^{j}(\hat{\Sigma}_{\hat{w}}^{a,I})$
is constant when $t'$ varies in $\mathcal{H}'^{0}$. In particular,
it is constant when $t'$ varies in $\mathcal{T}'\cap\mathcal{H}'^{0}$.
Now $\mathcal{T}'^{F}\subseteq\mathcal{H}'$ and for $t'\in\mathcal{T}'^{F}$,
the map $f_{t'}$ coincides with the action of $t'^{-1}$ in the $\mathcal{T}'^{F}$-action
on $\hat{\Sigma}_{\hat{w}}^{a,I}$ (we use that $\psi(1)=1$). We
see that the induced action of $\mathcal{T}'^{F}$ on $ $$H_{c}^{j}(\hat{\Sigma}_{\hat{w}}^{a,I})$
is trivial when restricted to $\mathcal{T}'\cap\mathcal{H}'^{0}$.

Now let $n\geq1$ be an integer such that $F^{n}(\hat{w}^{-1}\mathcal{T}^{\alpha}\hat{w})=\hat{w}^{-1}\mathcal{T}^{\alpha}\hat{w}$.
Then \[
t'\longmapsto t'F(t')F^{2}(t')\cdots F^{n-1}(t')\]
is a well-defined morphism $\hat{w}^{-1}\mathcal{T}^{\alpha}\hat{w}\rightarrow\mathcal{H}'$.
Its image is a connected subgroup of $\mathcal{H}'$, hence contained
in $\mathcal{H}'^{0}$. If $t'\in(\hat{w}^{-1}\mathcal{T}^{\alpha}\hat{w})^{F^{n}}$,
then $N_{F}^{F^{n}}(t')\in\mathcal{T}'^{F}$; thus $N_{F}^{F^{n}}(t')\in\mathcal{T}'^{F}\cap\mathcal{H}'^{0}$.
We see that the action of $N_{F}^{F^{n}}(t')\in\mathcal{T}'^{F}$
on $H_{c}^{j}(\hat{\Sigma}_{\hat{w}}^{a,I})$ is trivial for any $t'\in(\hat{w}^{-1}\mathcal{T}^{\alpha}\hat{w})^{F^{n}}$.

If we assume that $H_{c}^{j}(\hat{\Sigma}_{\hat{w}}^{a,I})_{(\chi)}\neq0$,
it follows that $t'\mapsto\chi(N_{F}^{F^{n}}(t'))$ is the trivial
character of $ $$(\hat{w}^{-1}\mathcal{T}^{\alpha}\hat{w})^{F^{n}}$.
This contradicts our assumption that $\chi$ is regular. Thus \eqref{eq:Big-thm (d)}
holds, and hence \eqref{eq:Big-thm (c)} holds.

We now prove \eqref{eq:Big-thm (b)}. Let \[
\tilde{H}=\{(t,t')\in T\times T'\mid tF(t)^{-1}=F(\hat{w})t'F(t')^{-1}F(\hat{w})^{-1}\}.\]
This is a closed subgroup of $T\times T'$ containing $T^{F}\times T'^{F}$.
Now the action of $T^{F}\times T'^{F}$ on $\hat{\Sigma}_{\hat{w}}''$
extends to an action of $\tilde{H}$ given by the same formula. To
see this, consider $(t,t')\in\tilde{H}$ and $(x,x',u,u',1,\tau')\in\hat{\Sigma}_{\hat{w}}''$.
We must show that \[
(txt^{-1},t'x't'^{-1},tut^{-1},t'u't'^{-1},1,\hat{w}^{-1}t\hat{w}\tau't'^{-1})\in\hat{\Sigma}_{\hat{w}}'',\]
that is,\[
txt^{-1}F(\hat{w})F(\hat{w}^{-1})F(t)F(\hat{w})F(\tau')F(t'^{-1})=tut^{-1}\hat{w}\hat{w}^{-1}t\hat{w}\tau't'u't'^{-1}t'x't'^{-1},\]
or that\[
xt^{-1}F(t)F(\hat{w})F(\tau')F(t'^{-1})=u\hat{w}\tau'u'x't'^{-1},\]
or that\[
xt^{-1}F(t)F(\hat{w})F(\tau')F(t'^{-1})=xF(\hat{w})F(\tau')t'^{-1},\]
or that $t^{-1}F(t)F(\hat{w})F(t'^{-1})=F(\hat{w})t'^{-1}$, which
is clear. Let $T_{*}$ (resp. $T'_{*}$) be the reductive part of
$T$ (resp. $T'$) (thus $T_{*}$ is a torus isomorphic to $T$).
Let $\tilde{H}_{*}=\tilde{H}\cap(T_{*}\times T_{*}')$. Then $\tilde{H}_{*}^{0}$
is a torus acting on $\hat{\Sigma}_{\hat{w}}''$ by restriction of
the $\tilde{H}$-action. The fixed point set $(\hat{\Sigma}_{\hat{w}}'')^{\tilde{H}_{*}^{0}}$
is stable under the action of $T^{F}\times T'^{F}$, and by \cite{dignemichel},
4.5 (compare 11.2) and 10.15 we have\begin{multline*}
\sum_{j\in\mathbb{Z}}(-1)^{j}\dim H_{c}^{j}(\hat{\Sigma}_{\hat{w}}'')_{\theta^{-1},\theta'}\\
=|T^{F}\times T'^{F}|^{-1}\sum_{(t,t')\in T^{F}\times T'^{F}}\mathcal{L}((t,t'),\hat{\Sigma}_{\hat{w}}'')\theta(t)\theta'(t')^{-1}\\
=|T^{F}\times T'^{F}|^{-1}\sum_{(t,t')\in T^{F}\times T'^{F}}\mathcal{L}((t,t'),(\hat{\Sigma}_{\hat{w}}'')^{\tilde{H}_{*}^{0}})\theta(t)\theta'(t')^{-1}\\
=\sum_{j\in\mathbb{Z}}(-1)^{j}\dim H_{c}^{j}((\hat{\Sigma}_{\hat{w}}'')^{\tilde{H}_{*}^{0}})_{\theta^{-1},\theta'}.\end{multline*}
It is then enough to show that\begin{align}
 & \sum_{j\in\mathbb{Z}}(-1)^{j}\dim H_{c}^{j}((\hat{\Sigma}_{\hat{w}}'')^{\tilde{H}_{*}^{0}})_{\theta^{-1},\theta'}\text{ is equal to }1\text{ if }F(w)=w\label{eq:Big-thm (e)}\\
 & \text{ and }\text{Ad}(\hat{w}):T'^{F}\rightarrow T^{F}\text{ carries }\theta\text{ to }\theta',\text{ and equals }0\text{ otherwise}.\nonumber \end{align}
Let $(x,x',u,u',1,\tau')\in(\hat{\Sigma}_{\hat{w}}'')^{\tilde{H}_{*}^{0}}$.
By the Lang-Steinberg theorem the first projection $\tilde{H}_{*}\rightarrow T_{*}$
is surjective. It follows that the first projection $\tilde{H}_{*}^{0}\rightarrow T_{*}$
is surjective. Similarly the second projection $\tilde{H}_{*}^{0}\rightarrow T_{*}'$.
Hence for any $t\in T_{*}$, $t'\in T_{*}$ we have\[
txt^{-1}=x,\, t'x't'^{-1}=x',\, tut^{-1}=u,\, t'u't'^{-1}=u',\]
and hence $x=x'=u=u'=1$. Thus $(\hat{\Sigma}_{\hat{w}}'')^{\tilde{H}_{*}^{0}}$
is contained in\begin{equation}
\{(1,1,1,1,1,\tau')\mid\tau'\in T',F(\hat{w}\tau')=\hat{w}\tau'\}.\label{eq:Big-thm (f)}\end{equation}
The set \eqref{eq:Big-thm (f)} is clearly contained in the fixed
point set of $\tilde{H}$. Note that \eqref{eq:Big-thm (f)} is empty
unless $F(w)=w$, by Bruhat decomposition in $G_{1}$. If \eqref{eq:Big-thm (f)}
is empty, then $\sum_{j\in\mathbb{Z}}(-1)^{j}\dim H_{c}^{j}((\hat{\Sigma}_{\hat{w}}'')^{\tilde{H}_{*}^{0}})_{\theta^{-1},\theta'}=0$.
We can therefore assume that $F(w)=w$. Now \eqref{eq:Big-thm (f)}
is stable under the action of $\tilde{H}$. Indeed, if $\tau'\in T'$
is such that $F(\hat{w}\tau')=\hat{w}\tau'$ and $(t,t')\in\tilde{H}$,
then \begin{multline*}
F(\hat{w}\hat{w}^{-1}t\hat{w}\tau't'^{-})=F(\hat{w})F(t')t'^{-1}F(\hat{w})^{-1}tF(\hat{w})F(\tau')F(t'^{-1})\\
=tF(\hat{w})F(t')t'^{-1}F(\tau')F(t'^{-1})=tF(\hat{w})F(\tau')t'^{-1}=\hat{w}\hat{w}^{-1}t\hat{w}\tau't'^{-1}.\end{multline*}
Thus in particular, \eqref{eq:Big-thm (f)} is stable under $\tilde{H}_{*}^{0}$.
Since \eqref{eq:Big-thm (f)} is a finite set and $\tilde{H}_{*}^{0}$
is connected, we see that $\tilde{H}_{*}^{0}$ must act trivially
on \eqref{eq:Big-thm (f)}. Thus, \eqref{eq:Big-thm (f)} is exactly
the fixed point set of $\tilde{H}_{*}^{0}$, and hence $(\hat{\Sigma}_{\hat{w}}'')^{\tilde{H}_{*}^{0}}\cong(\hat{w}T')^{F}$.
Since\begin{multline*}
\#((\hat{\Sigma}_{\hat{w}}'')^{\tilde{H}_{*}^{0}})^{(t,t')}=\#\{\hat{w}\tau'\in(\hat{w}T')^{F}\mid\hat{w}^{-1}t\hat{w}\tau't'^{-1}=\tau'\}\\
=\#\{\hat{w}\tau'\in(\hat{w}T')^{F}\mid\hat{w}^{-1}t\hat{w}=t'\}=\begin{cases}
|(\hat{w}T')^{F}|=|T'^{F}| & \text{if }\hat{w}^{-1}t\hat{w}=t',\\
0 & \text{otherwise,}\end{cases}\end{multline*}
 it follows from facts quoted above together with \cite{lusztigexpo},
1.10 that \begin{multline*}
\sum_{j\in\mathbb{Z}}(-1)^{j}\dim H_{c}^{j}((\hat{\Sigma}_{\hat{w}}'')^{\tilde{H}_{*}^{0}})_{\theta^{-1},\theta'}\\
=|T^{F}\times T'^{F}|^{-1}\sum_{(t,t')\in T^{F}\times T'^{F}}\mathcal{L}((t,t'),(\hat{\Sigma}_{\hat{w}}'')^{\tilde{H}_{*}^{0}})\theta(t)\theta'(t')^{-1}\\
=|T^{F}\times T'^{F}|^{-1}\sum_{t\in T{}^{F}}|T'^{F}|\theta(t)\theta'(\hat{w}^{-1}t\hat{w})^{-1}\\
=\langle\theta,{}^{\hat{w}^{-1}}\theta'\rangle_{T^{F}}=\begin{cases}
1 & \text{if }{}^{\hat{w}}\theta=\theta',\\
0 & \text{otherwise.}\end{cases}\end{multline*}
Thus we have established \eqref{eq:Big-thm (e)}, and so the theorem
is proved.
\end{proof}
We finish by giving some important consequences of the preceding results.
Let $\mathcal{R}(G^{F})$ be the group of virtual representations
of $G^{F}$ over $\overline{\mathbb{Q}}_{l}$. Let $\langle\cdot,\cdot\rangle$
be the standard inner product $\mathcal{R}(G^{F})\times\mathcal{R}(G^{F})\rightarrow\mathbb{Z}$.
Let \[
S_{T,U}=\{g\in G\mid g^{-1}F(g)\in F(U)\}.\]
The finite group $G^{F}\times T^{F}$ acts on $S_{T,U}$ by $(g_{1},t):g\mapsto g_{1}gt^{-1}$.
For any $i\in\mathbb{Z}$ we have an induced action of $G^{F}\times T^{F}$
on $H_{c}^{i}(S_{T,U})$. For $\theta\in\widehat{T^{F}}$ we denote
by $H_{c}^{i}(S_{T,U})_{\theta}$ the subspace of $H_{c}^{i}(S_{T,U})$
on which $T^{F}$ acts according to $\theta$. This is a $G^{F}$-submodule
of $H_{c}^{i}(S_{T,U})$. Let\[
R_{T,U}^{\theta}=\sum_{i\in\mathbb{Z}}(-1)^{i}H_{c}^{i}(S_{T,U})_{\theta}\in\mathcal{R}(G^{F}).\]

\begin{prop}
\label{pro:Geom conj}Let notation be as before. Then the following
holds:
\begin{enumerate}
\item Assume that there exists integers $i$ and $i$' and an irreducible
$G^{F}$-module that appears in the $G^{F}$-module $(H_{c}^{i}(S_{T,U})_{\theta^{-1}})^{*}$
(dual of $H_{c}^{i}(S_{T,U})_{\theta^{-1}}$) and in the $G^{F}$-module
$H_{c}^{i'}(S_{T',U'})_{\theta'}$. Then there exists $n\geq1$ and
$g\in N(T',T)^{F^{n}}$ such that $\mathrm{Ad}(g)$ carries $\theta\circ N_{F}^{F^{n}}|_{\mathcal{T}^{F^{n}}}\in\widehat{\mathcal{T}^{F^{n}}}$
to $\theta'\circ N_{F}^{F^{n}}|_{\mathcal{T}'^{F^{n}}}\in\widehat{\mathcal{T}'^{F^{n}}}$.
\item Assume that there exists an irreducible $G^{F}$-module that appears
in the virtual $G^{F}$-module $R_{T,U}^{\theta}$ and in the virtual
$G^{F}$-module $R_{T',U'}^{\theta'}$. Then there exists $n\geq1$
and $g\in N(T,T')^{F^{n}}$ such that $\mathrm{Ad}(g)$ carries $\theta\circ N_{F}^{F^{n}}|_{\mathcal{T}^{F^{n}}}\in\widehat{\mathcal{T}^{F^{n}}}$
to $\theta'\circ N_{F}^{F^{n}}|_{\mathcal{T}'^{F^{n}}}\in\widehat{\mathcal{T}'^{F^{n}}}$.
\end{enumerate}
\end{prop}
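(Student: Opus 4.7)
The plan is to reduce both parts to Lemma~\ref{lem:Lusztig's 1.4} by identifying the variety $\Sigma$ geometrically with a quotient of the product variety $S_{T,U}\times S_{T',U'}$.

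For part (a), I would first introduce the morphism
$$\phi\colon S_{T,U}\times S_{T',U'}\longrightarrow\Sigma,\qquad(g,g')\longmapsto\bigl(g^{-1}F(g),\,g'^{-1}F(g'),\,g^{-1}g'\bigr),$$
whose image lies in $\Sigma$ by a direct computation: with $x=g^{-1}F(g)$, $x'=g'^{-1}F(g')$, $y=g^{-1}g'$ one has $xF(y)=g^{-1}F(g')=yx'$. The map is constant on the orbits of the diagonal left-multiplication action of $G^{F}$ on the source, and the Lang--Steinberg theorem for the connected group $G$ provides an inverse (given $(x,x',y)\in\Sigma$, pick $g$ with $g^{-1}F(g)=x$ and set $g'=gy$). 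Hence $\phi$ descends to a $(T^{F}\times T'^{F})$-equivariant isomorphism $G^{F}\backslash(S_{T,U}\times S_{T',U'})\cong\Sigma$ (up to the conventional inversion of the torus actions introduced by the change of variables). Since $G^{F}$ acts freely on the source, $H_{c}^{\ast}(\Sigma)=H_{c}^{\ast}(S_{T,U}\times S_{T',U'})^{G^{F}}$, and K\"unneth gives $H_{c}^{j}(\Sigma)\cong\bigoplus_{i+i'=j}\bigl(H_{c}^{i}(S_{T,U})\otimes H_{c}^{i'}(S_{T',U'})\bigr)^{G^{F}}$. Via the identity $(V\otimes W)^{G^{F}}=\mathrm{Hom}_{G^{F}}(V^{\ast},W)$, the hypothesis of (a) translates into the non-vanishing of $\bigl(H_{c}^{i}(S_{T,U})_{\theta^{-1}}\otimes H_{c}^{i'}(S_{T',U'})_{\theta'}\bigr)^{G^{F}}$ for some $i,i'$, whence $H_{c}^{i+i'}(\Sigma)_{\theta,\theta'^{-1}}\neq 0$. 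Applying Lemma~\ref{lem:Lusztig's 1.4} with $\theta$ and $\theta'$ replaced by $\theta^{-1}$ and $\theta'^{-1}$ produces the desired $g\in N(T',T)^{F^{n}}$, with the same conclusion about $\mathrm{Ad}(g)$ acting on the original $\theta$ and $\theta'$ (since carrying $\chi^{-1}$ to $\chi'^{-1}$ is equivalent to carrying $\chi$ to $\chi'$).

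For part (b), the plan is to reduce to (a). If an irreducible $G^{F}$-module $\rho$ appears with nonzero multiplicity in the virtual representation $R_{T,U}^{\theta}=\sum_{i}(-1)^{i}[H_{c}^{i}(S_{T,U})_{\theta}]$, then some $H_{c}^{i_{0}}(S_{T,U})_{\theta}$ must contain $\rho$ as a genuine constituent (otherwise the signed sum would vanish), and similarly $\rho$ occurs in some $H_{c}^{i_{1}}(S_{T',U'})_{\theta'}$. To invoke (a) one also needs a constituent $\rho^{\ast}\subseteq H_{c}^{j}(S_{T,U})_{\theta^{-1}}$ for some $j$. This is provided by Poincar\'e duality applied to the smooth affine variety $S_{T,U}$ (smoothness is immediate since the Lang map $L(g)=g^{-1}F(g)$ is \'etale), combined with the independence of $R_{T,U}^{\theta}$ from the choice of $U$, which yields the standard Deligne--Lusztig identity $(R_{T,U}^{\theta})^{\ast}=\pm R_{T,U}^{\theta^{-1}}$ in the Grothendieck group of $G^{F}$-representations. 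Hence $\rho^{\ast}$ appears in $R_{T,U}^{\theta^{-1}}$, and therefore in some $H_{c}^{j}(S_{T,U})_{\theta^{-1}}$, supplying the hypothesis of (a). Applying (a) yields $g\in N(T',T)^{F^{n}}$, whose inverse $g^{-1}\in N(T,T')^{F^{n}}$ then satisfies the conditions of (b) (since $\mathrm{Ad}(g^{-1})$ reverses the direction in which $\mathrm{Ad}(g)$ carries characters).

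The main obstacle will be verifying the duality identity $(R_{T,U}^{\theta})^{\ast}=\pm R_{T,U}^{\theta^{-1}}$ in the present generality. Poincar\'e duality on the smooth affine $S_{T,U}$ directly furnishes $H_{c}^{i}(S_{T,U})^{\ast}\cong H^{2d-i}(S_{T,U})(d)$ with $d=\dim U$, which expresses $(R_{T,U}^{\theta})^{\ast}$ as a signed sum built from $H^{\ast}$ rather than $H_{c}^{\ast}$; the conversion back to $H_{c}^{\ast}$ relies on a Grothendieck--Lefschetz-type virtual character identity together with the independence of $R_{T,U}^{\theta}$ on $U$. These ingredients are all standard in the classical Deligne--Lusztig setting and should adapt without difficulty to group schemes over Artinian local rings via the Greenberg functor, but the verification requires careful tracking of the $T^{F}$-action conventions and of the Tate twists.
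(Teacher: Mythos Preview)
Your argument for part (a) is essentially the paper's: the same quotient map $G^{F}\backslash(S_{T,U}\times S_{T',U'})\cong\Sigma$, the same K\"unneth/invariants step, and the same appeal to Lemma~\ref{lem:Lusztig's 1.4}. The only cosmetic difference is in how you track the torus characters (landing in the $(\theta,\theta'^{-1})$-eigenspace rather than $(\theta^{-1},\theta')$), which you correctly repair by feeding inverses into Lemma~\ref{lem:Lusztig's 1.4}.

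For part (b) there is a genuine gap. You invoke the independence of $R_{T,U}^{\theta}$ from the choice of $U$ in order to deduce the duality identity $(R_{T,U}^{\theta})^{*}=\pm R_{T,U}^{\theta^{-1}}$. In this paper that independence is Corollary~\ref{cor:irrep & indepU}(a), which is proved \emph{after} the present proposition and, more seriously, requires $\theta$ to be regular---an assumption not present in the statement you are proving. So your route is either circular or imposes an extra hypothesis.

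The paper sidesteps this entirely. The needed identity
\[
\sum_{i}(-1)^{i}\bigl(H_{c}^{i}(S_{T,U})_{\theta^{-1}}\bigr)^{*}=\sum_{i}(-1)^{i}H_{c}^{i}(S_{T,U})_{\theta}
\]
is cited directly from \cite{dignemichel}, 11.4; it follows from the fact that the Lefschetz virtual character $\sum_{i}(-1)^{i}\mathrm{Tr}(\,\cdot\,,H_{c}^{i}(X))$ is integer-valued for any finite group acting on a variety $X$, hence self-dual as a virtual representation. Passing to the $\theta$-isotypic component gives the identity with no Poincar\'e duality, no independence of $U$, and no regularity. With this in hand, if $\rho$ appears in $R_{T,U}^{\theta}$ then it already appears in some $(H_{c}^{i}(S_{T,U})_{\theta^{-1}})^{*}$, and (a) applies immediately.
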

\begin{proof}
We prove (a). Consider the free $G^{F}$-action on $S_{T,U}\times S_{T',U'}$
given by $g_{1}:(g,g')\mapsto(g_{1}g,g_{1}g')$. The map\[
(g,g')\longmapsto(x,x',y),\quad x=g^{-1}F(g),x'=g'^{-1}F(g'),y=g^{-1}g'\]
defines an isomorphism from $G^{F}\backslash(S_{T,U}\times S_{T',U'})$
to $\Sigma$ (the fact that it is an isomorphism and not merely a
bijective homomorphism is proved in \cite{carter}, p.~221-222 in
the situation where $r=1$; the same argument works in general). The
action of $T^{F}\times T'^{F}$ on $S_{T,U}\times S_{T',U'}$ given
by right multiplication by $t^{-1}$ on the first factor and by $t'^{-1}$
on the second factor, becomes an action of $T^{F}\times T'^{F}$ on
$\Sigma$ given by $(x,x',y)\mapsto(txt^{-1},t'x't'^{-1},tyt'^{-1})$.
Our assumption implies that the $G^{F}$-module $H_{c}^{i}(S_{T,U})_{\theta^{-1}}\otimes H_{c}^{i'}(S_{T',U'})_{\theta'}$
contains the trivial representation with non-zero multiplicity, that
is, $(H_{c}^{i}(S_{T,U})_{\theta^{-1}}\otimes H_{c}^{i'}(S_{T',U'})_{\theta'})^{G^{F}}\neq0$.
By \cite{dignemichel}, 10.9 and 10.10(i) we have an inclusion \[
(H_{c}^{i}(S_{T,U})_{\theta^{-1}}\otimes H_{c}^{i'}(S_{T',U'})_{\theta'})^{G^{F}}\longhookrightarrow H_{c}^{i+i'}(G^{F}\backslash(S_{T,U}\times S_{T',U'}))_{\theta^{-1},\theta'},\]
and so $H_{c}^{i+i'}(G^{F}\backslash(S_{T,U}\times S_{T',U'}))_{\theta^{-1},\theta'}\neq0$.
By the above isomorphism we thus have $H_{c}^{i+i}(\Sigma)_{\theta^{-1},\theta'}\neq0$.
We now use Lemma~\ref{lem:Lusztig's 1.4} and (a) follows.

We prove (b). By \cite{dignemichel}, 11.4 we have\[
\sum_{i}(-1)^{i}(H_{c}^{i}(S_{T,U})_{\theta^{-1}})^{*}=\sum_{i}(-1)^{i}H_{c}^{i}(S_{T,U})_{\theta}.\]
Hence the assumption of (b) implies that the assumption of (a) holds.
Hence the conclusion of (a) holds. The proposition is proved.
\end{proof}
\begin{prop}
\label{pro:scalar product formula}Assume that $\theta$ or $\theta'$
is regular (see Definition~\ref{def:regular}). Then\[
\langle R_{T,U}^{\theta},R_{T',U'}^{\theta'}\rangle=\#\{w\in W(T_{1},T_{1}')^{F}\mid{}^{\hat{w}}\theta=\theta'\}.\]

\end{prop}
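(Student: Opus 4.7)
The plan is to reduce the computation of $\langle R_{T,U}^{\theta}, R_{T',U'}^{\theta'}\rangle$ to the Lefschetz-type sum $\sum_j (-1)^j \dim H_c^j(\Sigma)_{\theta^{-1},\theta'}$ and then invoke Theorem~\ref{thm:Big-thm}.

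First, assuming $\theta'$ is regular, I would expand the inner product using the duality $\sum_i (-1)^i (H_c^i(S_{T,U})_{\theta})^* = \sum_i (-1)^i H_c^i(S_{T,U})_{\theta^{-1}}$ recorded in the proof of Proposition~\ref{pro:Geom conj} (from \cite{dignemichel}, 11.4), which identifies $(R_{T,U}^{\theta})^*$ with $R_{T,U}^{\theta^{-1}}$ in the Grothendieck group. This gives
\[
\langle R_{T,U}^{\theta}, R_{T',U'}^{\theta'}\rangle = \dim\bigl(R_{T,U}^{\theta^{-1}} \otimes R_{T',U'}^{\theta'}\bigr)^{G^F} = \sum_{i,i'}(-1)^{i+i'}\dim\bigl(H_c^i(S_{T,U})_{\theta^{-1}} \otimes H_c^{i'}(S_{T',U'})_{\theta'}\bigr)^{G^F}.
\]

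Next, I would pass from the product $S_{T,U}\times S_{T',U'}$ to its free $G^F$-quotient. By the K\"unneth machinery of \cite{dignemichel}, 10.9--10.10 (together with the description of $G^F$-invariants in the cohomology of a free quotient), the displayed sum equals $\sum_j (-1)^j \dim H_c^j\bigl(G^F\backslash(S_{T,U}\times S_{T',U'})\bigr)_{\theta^{-1},\theta'}$. The isomorphism $G^F\backslash(S_{T,U}\times S_{T',U'}) \iso \Sigma$ used in the proof of Proposition~\ref{pro:Geom conj} rewrites this as $\sum_j (-1)^j \dim H_c^j(\Sigma)_{\theta^{-1},\theta'}$, and Theorem~\ref{thm:Big-thm} now evaluates the sum to $\#\{w \in W(T_1,T_1')^F \mid \mathrm{Ad}(\hat{w})\text{ carries }\theta\text{ to }\theta'\}$. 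With the convention ${}^{\hat{w}}\theta(t') := \theta(\hat{w} t' \hat{w}^{-1})$ for $t'\in T'^F$ (as used at the end of the proof of Theorem~\ref{thm:Big-thm}), this set is exactly $\{w \in W(T_1,T_1')^F \mid {}^{\hat{w}}\theta = \theta'\}$.

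If only $\theta$ (but not $\theta'$) is regular, the symmetry $\langle R_{T,U}^{\theta}, R_{T',U'}^{\theta'}\rangle = \langle R_{T',U'}^{\theta'}, R_{T,U}^{\theta}\rangle$ of the inner product of virtual characters lets me run the previous argument with the two tori exchanged, reducing to the count $\#\{w' \in W(T_1',T_1)^F \mid {}^{\hat{w}'}\theta' = \theta\}$; the involution $w\mapsto w^{-1}$ gives a bijection $W(T_1,T_1')^F \iso W(T_1',T_1)^F$ matching this count with the original one (and by the remarks preceding Theorem~\ref{thm:Big-thm}, the answer is independent of the choice of lifts). The main delicacy is the middle step, where one must match the $T^F\times T'^F$-actions through the quotient isomorphism and verify that the Digne--Michel K\"unneth/invariants formalism applies in our setting, where $G$ is the algebraic group associated to a reductive group scheme over an Artinian local ring via the Greenberg functor; once this reduction is secured, Theorem~\ref{thm:Big-thm} finishes the job immediately.
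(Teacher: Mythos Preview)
Your argument is correct and follows the paper's own proof essentially verbatim: reduce $\langle R_{T,U}^{\theta},R_{T',U'}^{\theta'}\rangle$ via duality and the K\"unneth/invariants formalism to $\sum_j(-1)^j\dim H_c^j(\Sigma)_{\theta^{-1},\theta'}$, then apply Theorem~\ref{thm:Big-thm}. The paper simply writes ``We may assume that $\theta'$ is regular'' for your symmetry step, but otherwise the approaches coincide.
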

\begin{proof}
We may assume that $\theta'$ is regular. We have \begin{multline*}
\langle R_{T,U}^{\theta},R_{T',U'}^{\theta'}\rangle\\
=\sum_{i,i'\in\mathbb{Z}}(-1)^{i+i'}\dim(H_{c}^{i}(S_{T,U})_{\theta^{-1}}\otimes H_{c}^{i'}(S_{T',U'})_{\theta'})^{G^{F}}\\
=\sum_{j\in\mathbb{Z}}(-1)^{j}\dim H_{c}^{j}(G^{F}\backslash(S_{T,U}\times S_{T',U'}))_{\theta^{-1},\theta'}\\
=\sum_{j\in\mathbb{Z}}(-1)^{j}\dim H_{c}^{j}(\Sigma)_{\theta^{-1},\theta'}.\end{multline*}
It remains to use Theorem~\ref{thm:Big-thm}.
\end{proof}
\begin{cor}
\label{cor:irrep & indepU}Assume that $\theta\in\widehat{T^{F}}$
is regular. Then
\begin{enumerate}
\item $R_{T,U}^{\theta}$ is independent of the choice of $U$.
\item Assume in addition that the there is no non-trivial element $w\in W(T_{1})^{F}$
such that $\hat{w}$ fixes $\theta$. Then $\pm R_{T,U}^{\theta}$
is an irreducible $G^{F}$-module.
\end{enumerate}
\end{cor}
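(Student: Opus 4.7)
The plan is to derive both parts directly from the scalar product formula of Proposition \ref{pro:scalar product formula}, exactly as in the Deligne--Lusztig theory over a field.

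For (a), fix a regular $\theta \in \widehat{T^F}$ and let $U$ and $U_1$ be two choices of unipotent radical of Borel subgroups of $\mathbf{G}$ containing $\mathbf{T}$, with associated subgroups $U,U_1$ of $G$. Taking $\mathbf{T}' = \mathbf{T}$ and $\theta' = \theta$ in Proposition \ref{pro:scalar product formula} (applicable in all three pairings below since $\theta$ is regular), we get
\[
\langle R_{T,U}^{\theta},R_{T,U}^{\theta}\rangle = \langle R_{T,U}^{\theta},R_{T,U_1}^{\theta}\rangle = \langle R_{T,U_1}^{\theta},R_{T,U_1}^{\theta}\rangle = N,
\]
where $N = \#\{w \in W(T_1)^F \mid {}^{\hat w}\theta = \theta\}$; note that the right-hand side is genuinely independent of $U$ and $U_1$ because the set being counted only involves $T$ and $\theta$. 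Therefore
\[
\langle R_{T,U}^{\theta} - R_{T,U_1}^{\theta},\, R_{T,U}^{\theta} - R_{T,U_1}^{\theta}\rangle = 2N - 2N = 0,
\]
and since $\langle\cdot,\cdot\rangle$ is a positive definite form on $\mathcal{R}(G^F)\otimes_{\mathbb{Z}}\mathbb{R}$, this forces $R_{T,U}^{\theta} = R_{T,U_1}^{\theta}$.

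For (b), the additional hypothesis says that the only $w\in W(T_1)^F$ with ${}^{\hat w}\theta = \theta$ is $w = 1$, so $N = 1$. Applying Proposition \ref{pro:scalar product formula} once more with $\mathbf{T}' = \mathbf{T}$, $\theta' = \theta$ yields $\langle R_{T,U}^{\theta},R_{T,U}^{\theta}\rangle = 1$. Writing $R_{T,U}^{\theta} = \sum_{\rho} m_\rho \rho$ as a $\mathbb{Z}$-linear combination of irreducible $G^F$-modules, this gives $\sum_\rho m_\rho^2 = 1$, so exactly one $m_\rho$ is nonzero and equals $\pm 1$. Hence $\pm R_{T,U}^{\theta}$ is an irreducible $G^F$-module, as claimed.

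There is no real obstacle here beyond ensuring that the scalar product formula applies in the diagonal situation $\mathbf{T} = \mathbf{T}'$, $\theta = \theta'$, with the independence of $U$ (and of the particular lifts $\hat w$) built into the right-hand side of the formula; the only subtlety is remembering that the count $N$ on the right-hand side involves $W(T_1)^F$, not $W(T_1,T_1')^F$, once the two tori coincide, so that it is manifestly independent of the choices of $U$ and $U_1$ used on the left-hand side. Everything else is a two-line application of positive definiteness of $\langle\cdot,\cdot\rangle$.
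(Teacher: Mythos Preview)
Your proof is correct and follows essentially the same approach as the paper's own proof: both apply Proposition~\ref{pro:scalar product formula} with $T'=T$, $\theta'=\theta$ to see that all the relevant inner products are equal, deduce $\langle R_{T,U}^{\theta}-R_{T,U_1}^{\theta},R_{T,U}^{\theta}-R_{T,U_1}^{\theta}\rangle=0$ for (a), and use $\langle R_{T,U}^{\theta},R_{T,U}^{\theta}\rangle=1$ for (b). Your write-up is in fact slightly more explicit than the paper's in spelling out why norm $1$ forces $\pm R_{T,U}^{\theta}$ to be irreducible.
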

\begin{proof}
We prove (a). Let $V$ be the subgroup of $G$ associated with the
unipotent radical $\mathbf{V}$ of another Borel subgroup of $\mathbf{G}$
containing $\mathbf{T}$. By Prop.~\ref{pro:scalar product formula}
we have \[
\langle R_{T,U}^{\theta},R_{T,U}^{\theta}\rangle=\langle R_{T,U}^{\theta},R_{T,U'}^{\theta}\rangle=\langle R_{T,U'}^{\theta},R_{T,U}^{\theta}\rangle=\langle R_{T,U'}^{\theta},R_{T,U'}^{\theta}\rangle.\]
Hence $\langle R_{T,U}^{\theta}-R_{T,U'}^{\theta},R_{T,U}^{\theta}-R_{T,U'}^{\theta}\rangle=0$,
and so $R_{T,U}^{\theta}=R_{T,U'}^{\theta}$. This proves (a). In
the setup of (b), we see from Prop.~\ref{pro:scalar product formula}
that $\langle R_{T,U}^{\theta},R_{T,U}^{\theta}\rangle=1$, which
proves (b).
\end{proof}
\bibliographystyle{alex}
\bibliography{alex}

\end{document}